\documentclass[10pt]{article}

\usepackage{amscd,amsfonts,amsmath,amssymb,amsthm}
\usepackage[section]{algorithm}
\usepackage[noend]{algpseudocode}
\usepackage[title]{appendix}
\usepackage[affil-it]{authblk}
\usepackage{boldtensors}
\usepackage{booktabs}
\usepackage{fancyhdr}
\usepackage{float}
\usepackage{graphicx}
\usepackage{ifpdf}
\usepackage{mathtools}
\usepackage[frozencache,cachedir=.]{minted}
\usepackage{soul}
\usepackage{subcaption}
\usepackage{url}
\usepackage{xcolor}
\usepackage{hyperref}
\usepackage[nameinlink]{cleveref}
\usepackage{siunitx}
\usepackage{enumitem}
\usepackage{subcaption}
\usepackage{tikz}

\ifpdf
\hypersetup{
    pdftitle={Inverse problems for history-enriched linear model reduction},
    pdfauthor={Arjun Vijaywargiya, George Biros}
}
\fi

\pagestyle{plain}
\setlength{\topmargin}{-35pt}
\setlength{\oddsidemargin}{0in}
\setlength{\evensidemargin}{0in}
\setlength{\headheight}{15pt}
\setlength{\headsep}{20pt}
\setlength{\hoffset}{0in}
\setlength{\voffset}{0in}
\setlength{\textheight}{9in}
\setlength{\textwidth}{6.5in}
\setlength{\topsep}{10pt}
\setlength{\parsep}{0pt}
\setlength{\parskip}{0pt}
\setlength{\partopsep}{0pt}
\setlength{\itemsep}{0pt}
\linespread{1}

\pagestyle{fancy}
\fancyhead{}
\fancyhead[L]{History-enriched linear model reduction}
\fancyhead[R]{Vijaywargiya and Biros}

\hypersetup{
    colorlinks=true,
    linkcolor=black,
    citecolor=black,
    urlcolor=black,
}

\crefformat{equation}{#2(#1)#3}
\numberwithin{equation}{section}

\theoremstyle{plain}
\newtheorem{theorem}{Theorem}

\newtheorem{lemma}{Lemma}

\theoremstyle{remark}
\newtheorem{remark}{Remark}

\theoremstyle{definition}
\newtheorem{defn}{Definition}

\numberwithin{theorem}{section}
\numberwithin{proposition}{section}
\numberwithin{lemma}{section}
\numberwithin{corollary}{section}
\numberwithin{remark}{section}
\numberwithin{defn}{section}

\crefname{theorem}{Theorem}{Theorems}
\Crefname{theorem}{Theorem}{Theorems}
\crefname{proposition}{Proposition}{Propositions}
\Crefname{proposition}{Proposition}{Propositions}
\crefname{lemma}{Lemma}{Lemmata}
\Crefname{lemma}{Lemma}{Lemmata}
\crefname{corollary}{Corollary}{Corollaries}
\Crefname{corollary}{Corollary}{Corollaries}
\crefname{algorithm}{Algorithm}{Algorithms}
\Crefname{algorithm}{Algorithm}{Algorithms}
\crefname{appendix}{Appendix}{Appendices}
\Crefname{appendix}{Appendix}{Appendices}

\setminted{
    style=pastie,
    fontsize=\normalsize,
    linenos,
    frame=none
}

\begin{document} 

\pdfinfo{
   /Author (Arjun Vijaywargiya, George Biros)
   /Title (Inverse problems for history-enriched linear model reduction)
   /Keywords (Mori-Zwanzig formalism, closure modeling, memory closure, model reduction, data driven model, scientific machine learning)
}

\title{\textbf{Inverse problems for history-enriched linear model reduction}}

\author[1]{Arjun~Vijaywargiya\thanks{Corresponding author. E-mail: \href{mailto:arjunveejay@utexas.edu}{arjunveejay@utexas.edu}.}}
\author[1]{George~Biros}

\affil[1]{\normalsize Oden Institute for Computational Engineering and Sciences, The University of Texas at Austin}

\date{}

\maketitle

\vspace{-.25in}

\begin{abstract}
\noindent
Standard projection-based model reduction for dynamical systems incurs closure error because it only accounts for instantaneous dependence on the resolved state. From the Mori-Zwanzig (MZ) perspective, projecting the full dynamics onto a low-dimensional resolved subspace induces additional noise and memory terms arising from the dynamics of the unresolved component in the orthogonal complement. The memory term makes the resolved dynamics explicitly history dependent. In this work, based on the MZ identity, we derive exact, history-enriched models for the resolved dynamics of linear driven dynamical systems and formulate inverse problems to learn model operators from discrete snapshot data via least-squares regression. We propose a greedy time-marching scheme to solve the inverse problems efficiently and analyze operator identifiability under full and partial observation data availability. For full observation data, we show that, under mild assumptions, the operators are identifiable even when the full-state dynamics are governed by a general time-varying linear operator, whereas with partial observation data the inverse problem has a unique solution only when the full-state operator is time-invariant. To address the resulting non-uniqueness in the time-varying case, we introduce a time-smoothing Tikhonov regularization. Numerical results demonstrate that the operators can be faithfully reconstructed from both full and partial observation data and that the learned history-enriched MZ models yield accurate trajectories of the resolved state.
\end{abstract}

\noindent
\textbf{Keywords}: Mori-Zwanzig formalism, closure modeling, memory closure, model reduction, data driven model, scientific machine learning

\section{Introduction} \label{s:intro}
In many areas of science and engineering, one encounters driven linear dynamical systems of the form
\begin{align}\label{eq:cont_dyn}
\dot{~u}(t) = ~A ~u(t) + \bar{~g}(t), \qquad ~u(0) = ~u_0, \qquad t \in [0,T],
\end{align}
where $~u(t) \in \mathbb{R}^N$ is the (typically high dimensional) state vector, $~A$ is a linear operator, $\bar{~g}(t)$ is a prescribed forcing vector, and $T>0$ is the time horizon. A classical problem in machine learning is the recovery of an effective operator $\hat{~A}$ from discrete snapshot data that best explains the observed dynamics. Widely used methods like Dynamic Mode Decomposition (DMD) \cite{Schmid20105} and Operator Inference (OpInf) \cite{peherstorfer2016data} learn such best-fit operators from data via least-squares regression. Conversely, SINDy \cite{sindy} fits a nonlinear right-hand-side operator over a dictionary of candidate functions via sparse regression. However, in many applications the large dimensionality of $~u$ renders such standard approaches impractical due to high computational and storage costs. 

In practice, one is often interested only in the evolution of a few quantities of interest (QoIs), obtained by projecting the state onto a suitably chosen low-dimensional subspace. In low-rank DMD \cite{jovanovic2012low} and OpInf formulations, the QoIs are taken as modal amplitudes associated with a low-rank subspace identified from data, often via Proper Orthogonal Decomposition (POD) \cite{berkooz1993proper, Benner2015survey}. However, neither approach models the closure error arising from mode truncation, leading to inaccurate predictions when the discarded modes exert a non-negligible influence on the reduced dynamics. 

From the perspective of the Mori-Zwanzig formalism~\cite{mori1965transport, zwanzig1973nonlinear}, projecting onto such a low-dimensional subspace generally induces additional non-Markovian (memory) and noise contributions to the reduced dynamics. These contributions are neglected in standard low-rank DMD and OpInf, which approximate the reduced dynamics purely through the action of a time-local linear operator, resulting in an inexact model with closure error. An exact closed evolution equation that makes the additional contributions explicit is provided by the Mori-Zwanzig (MZ) identity. Suppose the state $~u$ can be decomposed into \emph{resolved} variables $~\phi \in \mathbb{R}^d$ (QoIs), and \emph{unresolved} variables $\widetilde{~\phi} \in \mathbb{R}^{\widetilde d}$, where $d + \widetilde d = N$ and $d \ll \widetilde d$. Then the MZ identity is a Volterra integro-differential equation (VIDE)~\cite{Wazwaz2011}
describing the exact dynamics of $~\phi$:
\begin{equation}\label{eq:vide}
\dot{~\phi}(t)
= ~g(t) 
+ \underbrace{~R(t)~\phi(t)\vphantom{\int_0^t ~K(t,s)\phi(s)\,ds}}_{\text{Markovian}}
+ \underbrace{~B(t,0)\widetilde{~\phi}(0)
+ \int_0^t ~B(t,s)\widetilde{~g}(s)\,ds\vphantom{\int_0^t ~K(t,s)~\phi(s)\,ds}}_{\text{Noise}}
+ \underbrace{\int_0^t ~K(t,s)\phi(s)\,ds\vphantom{\int_0^t ~K(t,s) ~\phi(s)\,ds}}_{\text{Non-Markovian}},
\end{equation}
Here, the linear operator $~R(t)$ plays the role of the instantaneous (or Markovian) contribution that is typically learned in standard approaches, while the noise kernel $~B(t,s)$ encodes the influence of unresolved initial conditions and forcing $\widetilde{~g}(t)$, and the memory kernel $~K(t,s)$ encodes the history dependence of $~\phi$. Despite the terminology, the ``noise'' term is deterministic; it is traditionally called noise only because it cannot be determined from $~\phi$ alone. Since the dimension of $\widetilde{~\phi}$ is typically much larger than that of $~\phi$, \eqref{eq:vide} can be viewed as an exact reduced-order model of the original system. However, computing solutions of \eqref{eq:vide} is challenging, as the integral terms are often expensive to evaluate, requiring efficient approximate treatments.

In this work, we formulate inverse problems that approximate the operators in \eqref{eq:vide} on a prescribed time mesh from trajectory snapshots via least-squares minimization, evaluating the integral terms using numerical quadrature. In our analysis, we adopt the following assumptions:
\begin{itemize}
\item \textbf{Linearity.} The dynamics are governed by \eqref{eq:cont_dyn}.
\item \textbf{Ensemble data.} We observe multiple trajectories generated from distinct initial conditions.
\item \textbf{Time-resolved snapshots.} Data are sampled on a uniform grid with spacing \(\Delta t\), sufficiently small to resolve the dynamics; discretization effects are neglected.
\item \textbf{Exact observations.} Snapshots are free of measurement error, and the dynamics are deterministic (no stochastic forcing).
\end{itemize}
When  $~A$ is constant in time, the kernels $~B$ and $~K$, are time-translation invariant, that is $~B(t,s) = ~B(t-s)$ and $~K(t,s) = ~K(t-s)$. However, when $~A$ varies in time, this is only an approximation. We nevertheless adopt this approximation even for $~A(t)$, since it makes the inverse problems far more tractable by significantly reducing the number of unknowns: the kernels depend only on the time lag instead of separately on $t$ and $s$. Viewed from this perspective, the memory and noise contributions act as corrective terms augmenting a standard time-local ROM. We refer to this time-translation invariance property as the \textit{stationarity assumption} in the rest of this paper.

Given $~\Phi_n \in \mathbb{R}^{N_s \times d}$ and $\widetilde{~\Phi}_n \in \mathbb{R}^{N_s \times \widetilde d}$ containing the resolved and unresolved states at time $t_n$ for $N_s$ trajectories, we consider the problem of reconstructing $~R$, $~K$, and $~B$ when $~A$ is unknown. We study \textbf{two distinct types of inverse problems}:

\begin{enumerate}
\item \textbf{Reconstruction from full observation data}: full observations of \(~u(t)\) are available for \(0 \le t_n \le T\), i.e., snapshots \(~\Phi_n\) and \(\widetilde{~\Phi}_n\) at each \(t_n\).
\item \textbf{Reconstruction from partial observation data}: only partial observations of \(~u(t)\) are available, i.e., snapshots \(~\Phi_n\) for \(0 \le t_n \le T\) together with the initial conditions \(\widetilde{~\Phi}_0\).
\end{enumerate}

{\bf Full Data} problems typically arise when constructing surrogate models from high-fidelity simulations, where full-state snapshot data are available. In contrast, {\bf Partial Data} problems are common when data come from experiments or from very large-scale simulations, where only the resolved component can be measured (or stored), making identification of the unresolved operator $\widetilde{~R}$ practically infeasible.

We propose a greedy, sequential-in-time reconstruction scheme to efficiently solve the global-in-time inverse problems. Within this framework, we show that {Full Data} problems are well-posed, while {Partial Data} problems are well-posed only when $~A(t)$ is time-invariant. For a time-varying $~A$, the partial data problem becomes ill-posed.

\begin{figure}
    \centering
    \includegraphics[width=\linewidth]{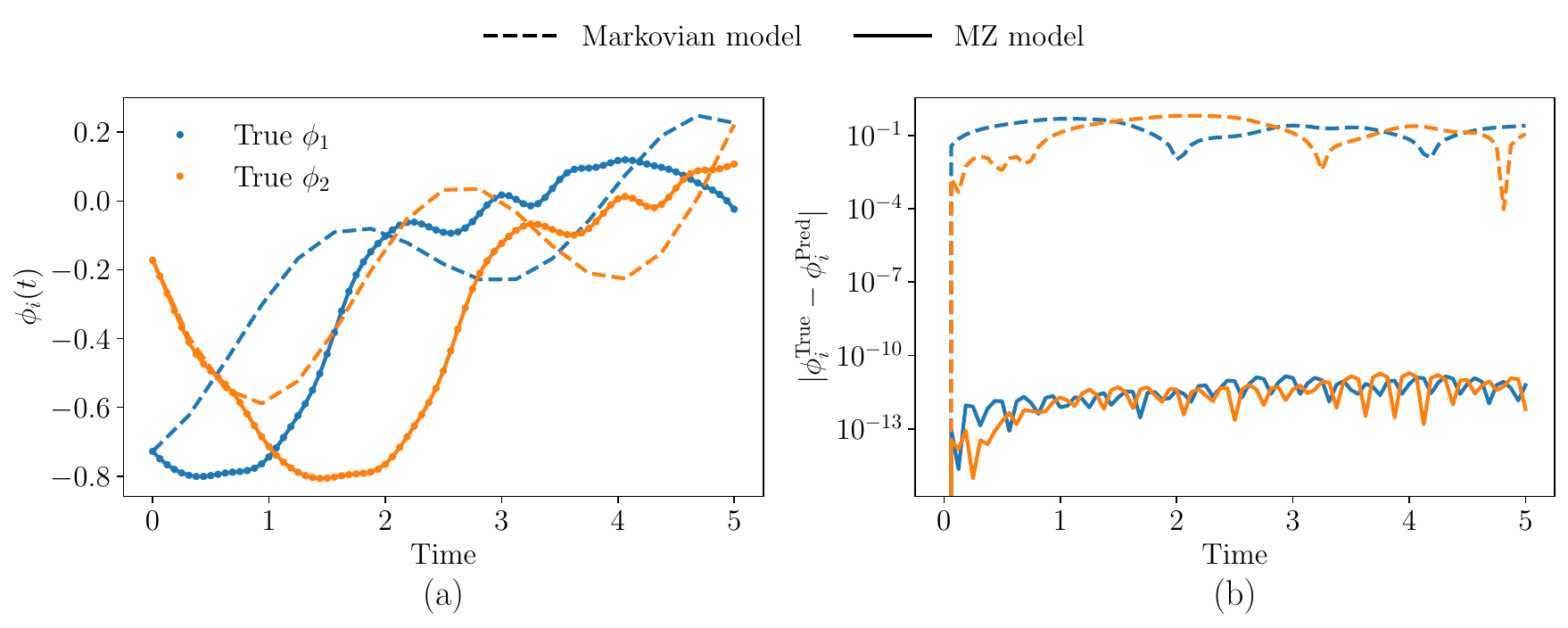}
    \caption{(a) True and predicted trajectories for two degrees of freedom (selected as the resolved variables) for a one-dimensional advection equation with periodic boundary conditions. Predictions are shown for a data-driven Markovian model and a data-driven MZ model \eqref{eq:vide}. (b) Absolute error in the predicted trajectories. The MZ model recovers the true trajectories, whereas the Markovian model exhibits large errors.}
    \label{fig:PODmodes}
\end{figure}

\subsection{Related work}
The Mori-Zwanzig (MZ) formalism has been used to derive or motivate closed reduced models in a wide range of applications, including coarse-grained molecular dynamics \cite{Lei2016, Li2015, Li2017a, Yoshimoto2013}, fluid and turbulent flows \cite{Parish2017a, Parish2017, Ma2019, Gouasmi2017, Wang2020, Maulik2020, tian2021data}, climate dynamics \cite{Kondrashov2015}, and uncertainty quantification \cite{Li2019,Venturi2017}. Direct approximation strategies for the MZ memory kernel are often developed within the Generalized Langevin Equation (GLE) framework, leading to short-memory approximations in the optimal prediction setting \cite{Chorin2002, Chorin2006a}, long-memory time-series expansions \cite{stinis2007}, and operator-series expansions \cite{Zhu2018, Zhu2018a}. Conversely, data-driven approaches seek to recover the memory integral or an equivalent non-Markovian closure contribution directly from data snapshots. Notable techniques include conditional-expectation estimators \cite{Brennan2018}, hierarchical rational parametrization in the Laplace domain \cite{Lei2016}, sparse polynomial regression \cite{Pan2018}, \emph{a priori} estimates via a pseudo-orthogonal dynamics equation in Liouville form \cite{Gouasmi2017}, and the NARMAX framework for stochastic parametrization \cite{Chorin2015, Lu2017a}. More recently, neural-network-based approximations of the memory integral (or equivalent non-Markovian closure) have been proposed, including LSTM-based models \cite{Wang2020, Maulik2020, gupta2025mori}, the Memory Neural Operators \cite{ruiz2025}, and deep neural networks that infer generalized Langevin memory kernels directly from data \cite{kerr2023deep}. In such methods, the data-driven memory approximation is coupled with additional architectures, such as POD-Galerkin models, neural ordinary differential equations, or autoencoders, to obtain closed models for the latent dynamics.

\subsection{Contributions}
While previous works have primarily emphasized application-specific modeling, problem formulation, and numerical experimentation, the numerical analysis of the inverse problem underlying the reconstruction of the memory kernel has received comparatively little attention. The principal objective of the present work is to address this gap by formulating a generic data-driven framework for learning MZ-based reduced models of driven linear dynamical systems and by investigating assumptions on the dynamics and data availability under which the MZ operators can be uniquely identified from discrete snapshot data. Although we restrict attention to linear systems, we anticipate that the analysis developed here will inform the design of data-driven memory closures for nonlinear systems in future work.

\noindent The main results of our work are summarized below:
\begin{itemize}
    \item For time-invariant $~A$, we show that the operators $~R$, $~B$, and
    $~K$ are uniquely recoverable within the proposed discrete reconstruction
    framework from both full and partial observation data; see
    \Cref{thm:type1,thm:type2}.
    \item For time-varying $~A(t)$, we show that the operators remain uniquely
    recoverable from full observation data, subject to the conditioning of the
    associated linear systems; see \Cref{thm:type1,remark:illcond}, and we
    identify additional assumptions on the forcing function that mitigate ill-conditioning; see
    \Cref{remark:fullrank}.
    \item For time-varying $~A(t)$ with partial observation data, we show that the greedy
    sequential reconstruction scheme is
    ill-posed; see \Cref{thm:type2}. To address this, we introduce a time-smoothing regularization that stabilizes the reconstruction.
\end{itemize}

\subsection{Outline}

The rest of the paper is organized as follows. \Cref{s:formulation} derives the MZ identity \eqref{eq:vide} for linear dynamical systems and discusses continuous and discrete formulations of both inverse problems. \Cref{s:methods} presents the core methodology, including reconstruction algorithms and our main well-posedness results. It also discusses learning and prediction under a finite-memory approximation and briefly analyzes the possibility of reconstructing non-stationary kernels. \Cref{s:results} applies the framework to a one-dimensional reaction-diffusion-advection equation with periodic boundary conditions, showing that it reliably reconstructs the MZ operators and accurately predicts the resolved dynamics. Finally, \Cref{app:proof} provides a detailed proof of \Cref{thm:type1}, \Cref{app:altdisc} presents alternative discretizations of the inverse problems, \Cref{app:pseudo} describes the PDE discretization, and \Cref{app:wave} presents the results of additional numerical experiments.

\section{Formulation}\label{s:formulation}
\subsection{Mori-Zwanzig formalism}

\begin{table}
  \centering
  \caption{Acronyms and important notations.}
  \begin{tabular}{ll}
    \toprule
    Acronym/symbol & Description \\
    \midrule
    MZ & Mori-Zwanzig \\
    QoIs & Quantities of interest \\
    ROM & Reduced Order Model \\
    VIDE & Volterra integro-differential equation\\
     $~u$ & High-dimensional state vector \\
     $N$ & Dimension of $~u$ \\
     $N_s$ & Number of trajectories \\
      $N_T$ & Number of points in time (assumed the same across trajectories)\\
     $~\phi$, $\widetilde{~\phi}$ & Vectors of resolved and unresolved variables \\
     $~g$, $\widetilde{~g}$ & Forcing vectors for resolved and unresolved variables \\
     $d, \widetilde d$ & Dimensions of $~\phi$ and $\widetilde{~\phi}$ \\
     $~R, \widetilde{~R}, ~K, ~B$ & Operators in the VIDE \eqref{eq:vide} reconstructed using inverse problems \\
      $~F_0$ & $N_s \times N$ matrix of initial conditions\\
      $~\Phi_n$ & $N_s \times d$ snapshot matrix of the resolved variables at time $t_n$ \\
      $\widetilde{~\Phi}_n$ & $N_s \times \widetilde d$ snapshot matrix of the unresolved variables at time $t_n$ \\
      $~G_n$ & $N_s \times d$ forcing matrix for the resolved variables at time $t_n$ \\
      $\widetilde{~G_n}$ & $N_s \times \widetilde d$ forcing matrix for the unresolved variables at time $t_n$ \\
      $~F^R_{n+1}, ~F^{KB}, ~F^{\text{reg}}_n$ &Least-squares operators in \eqref{eq:Rn}, \eqref{eq:R}, \eqref{eq:KB}, and \eqref{eq:reg_solve_2}\\
      $~Z^R_{n+1}, ~F^{KB}, ~Z^{\text{reg}}_n$ & Known matrices in \eqref{eq:Rn}, \eqref{eq:R}, \eqref{eq:KB}, and \eqref{eq:reg_solve_2}
  \end{tabular}
\end{table}

We illustrate the application of the MZ formalism to obtain a closed reduced model of the linear dynamical system \eqref{eq:cont_dyn}. The first step in the MZ procedure is to decompose the dynamics into resolved and unresolved components. Defining orthogonal projection operators $~P$ and $\widetilde{~P}$ by
\begin{align}\label{eq:proj_op}
    ~P~u := (~\phi, ~0), 
    \quad \widetilde{~P}~u := (~0, \widetilde{~\phi}), \quad \text{with} \quad 
    ~P + \widetilde{~P} = ~I, \quad
    ~P^2 = ~P, \quad
    \widetilde{~P}^2 = \widetilde{~P},
\end{align}
and projecting yields
\begin{subequations}\label{eq:mzdyn}
\begin{align}\label{eq:mzdyn1}
\frac{d}{dt}{~P~u} &= ~P\dot{~u} = ~P~A(~P + \widetilde{~P})~u + ~P\bar{~g}
= \underbrace{(~P~A~P)}_{\operatorname{Diag}(~R,~0)}\,~P~u
+ \underbrace{(~P~A\widetilde{~P})}_{\operatorname{Diag}(\widetilde{~R},~0)}\,\widetilde{~P}~u
+ \underbrace{~P\bar{~g}}_{~g}, \\
\frac{d}{dt}{\widetilde{~P}~u} &= \widetilde{~P}\dot{~u} = \widetilde{~P}~A(~P + \widetilde{~P})~u + {\widetilde{~P}\bar{~g}}
= \underbrace{(\widetilde{~P}~A~P)}_{\operatorname{Diag}(~0,~U)}\,~P~u
+ \underbrace{(\widetilde{~P}~A\widetilde{~P})}_{\operatorname{Diag}(~0,\widetilde{~U})}\,\widetilde{~P}~u
+ \underbrace{\widetilde{~P}\bar{~g}}_{\widetilde{~g}}. \label{eq:mzdyn2}
\end{align}
\end{subequations}
Let $~E(t,s)$ solve $\partial_t~E(t,s)=\widetilde{~P}~A(t)\widetilde{~P}\,~E(t,s)$ with $~E(s,s)=~I$. Then, integrating \eqref{eq:mzdyn2}, substituting the result in \eqref{eq:mzdyn1}, and rearranging yields an exact closed equation for the resolved components:
\begin{align}\label{eq:MZproj}
    ~P\dot{~u} 
    = ~P \bar{~g} + (~P~A~P) ~P~u  
    + {(~P~A\widetilde{~P})~E(t,0)}\widetilde{~P}~u_0 
    &+ \int_0^t {(~P~A\widetilde{~P}) ~E(t,s)} \widetilde{~P} \bar{~g}(s) \; ds \nonumber \\
    &+ \int_0^t {(~P~A\widetilde{~P}) ~E(t,s) (\widetilde{~P}~A~P)} ~P~u(s) \; ds.
\end{align}

\noindent If the full-state operator $~A$ is constant in time, the propagator has the simple form
$
    ~E(t,s) = e^{\widetilde{~P} ~A \widetilde{~P}(t-s)},
$
and the resolved dynamics evolve according to
\begin{align}
    \dot{~\phi} (t) 
    = ~g(t) + ~R~\phi(t)  
    + \underbrace{\widetilde{~R}e^{\widetilde{~U}t}\vphantom{\int_0^t}}_{~B(t)}\widetilde{~\phi}_0 
    + \int_0^t \underbrace{\widetilde{~R} e^{\widetilde{~U}(t-s)}\vphantom{\int_0^t}}_{~B(t-s)} \widetilde{~g}(s) \, ds
    + \int_0^t \underbrace{\widetilde{~R} e^{\widetilde{~U}(t-s)} ~U\vphantom{\int_0^t}}_{~K(t-s)} ~\phi(s) \, ds.
\end{align}
which has stationary memory and noise kernels. However, for a general time-varying $~A(t)$, the kernels $~K(t,s)$ and $~B(t,s)$ are not stationary, and simple convolution forms $~K(t-s)$ and $~B(t-s)$ are only modeling approximations. An application of the MZ framework to nonlinear dynamical systems is given in \cite{Wang2020}.

\subsection{Inverse problems}
We have demonstrated how the Mori-Zwanzig formalism can be applied to linear dynamical systems to obtain an exact dynamical equation for the resolved variables. In this subsection, we formulate inverse problems aimed at inferring the Markovian operator $~R(t)$, noise kernel $~B(t-s)$, and memory kernel $~K(t-s)$ directly from snapshot data. We require that the snapshot data be generated for $N_s$ different initial conditions and use the superscript notation $~\phi^{(j)}(t)$ and $\widetilde{~\phi}^{(j)}(t)$ to denote the resolved and unresolved states at time $t$ for the $j$-th trajectory.

\subsubsection{Formulation}
Given trajectories $\{\big(~\phi^{(j)}(t),\, \widetilde{~\phi}^{(j)}(t)\big)\}_{j=1}^{N_s}$ corresponding to $N_s$ independent initial conditions \linebreak $\{\big(~\phi^{(j)}_0, \, \widetilde{~\phi}^{(j)}_0\big)\}_{j=1}^{N_s}$, and forcings $\{\big(~g^{(j)}(t), \, \widetilde{~g}^{(j)}(t)\big)\}_{j=1}^{N_s}$, we construct two types of inverse problems presented in the definitions that follow.

\begin{defn}{(Continuous Full Data inverse problem)}\label{type:inv1}
Given full observation data \linebreak $\{\big(~\phi^{(j)}(t), \widetilde{~\phi}^{(j)}(t)\big)\}_{j=1}^{N_s}$ and forcings $\{\big(~g^{(j)}(t), \, \widetilde{~g}^{(j)}(t)\big)\}_{j=1}^{N_s}$, the Markovian operator $~R(t)$ and $\widetilde{~R}$ can be determined by solving
\begin{subequations}
\begin{align}
\arg\min_{~R, \widetilde{~R}} \; \frac{1}{2N_s} \sum_{j=1}^{N_s} \int_{0}^{T} 
\left\| \dot{~\phi}^{(j)}(t) - ~g^{(j)}(t) - ~R(t)~\phi^{(j)}(t) - \widetilde{~R}(t)\widetilde{~\phi}^{(j)}(t) \right\|_2^2 \, dt.
\end{align}    
Once $~R(t)$ is obtained, the memory kernel $~K(t-s)$ and the noise operator $~B(t)$ are determined by solving 
\begin{align}\label{type:inv1b}
    \arg\min_{~K, ~B} \; \frac{1}{2N_s} \sum_{j=1}^{N_s} \int_{0}^{T} 
\left\| 
    ~l^{(j)}(t) - ~B(t)\widetilde{~\phi}^{(j)}_0  - \int_{0}^{t} ~K(t-s)~\phi^{(j)}(s)\,ds  - \int_{0}^{t} ~B(t-s)\widetilde{~g}^{(j)}(s)\,ds 
\right\|_2^2 \, dt,
\end{align}
where $~l^{(j)}(t) = \dot{~\phi}^{(j)}(t) - ~g^{(j)}(t) - ~R(t)~\phi^{(j)}(t)$.
\end{subequations}
\end{defn}

\begin{defn}{(Continuous Partial Data inverse problem)}\label{type:inv2}
Given partial observation data \linebreak $\{\big(~\phi^{(j)}(t), \widetilde{~\phi}^{(j)}_0\big)\}_{j=1}^{N_s}$ and forcings $\{\big(~g^{(j)}(t), \, \widetilde{~g}^{(j)}(t)\big)\}_{j=1}^{N_s}$, the set of operators
$\{~R(t), ~K(t), ~B(t)\}$ is determined by solving
\begin{align}\label{eq:contpd}
    \underset{~R, ~K, ~B} {\operatorname{argmin}}\; \frac{1}{2N_s} \sum_{j=1}^{N_s} \int_{0}^{T} 
\left\| 
\dot{~\phi}^{(j)} (t)
- ~g^{(j)}(t)
- ~R(t)~\phi^{(j)} (t)
- ~B(t)\widetilde{~\phi}^{(j)}_0
- \int_{0}^{t} ~K(t-s)~\phi^{(j)}(s)\,ds 
\right.\nonumber  & \\ &
\left.
    \hspace{-40mm}
- \int_{0}^{t} ~B(t-s)\widetilde{~g}^{(j)}(s)\,ds
\right\|_2^2 \, dt .
\end{align}
\end{defn}

\subsubsection{Discrete formulation based on the Backward Euler rule}

We now present a discrete formulation of the inverse problems. For a simplified presentation, we present a first-order accurate time discretization that yields operators evaluated at time points consistent with the implicit Backward Euler forward solve \eqref{eq:time-march}. Alternative discretizations compatible with the Forward Euler and Implicit-Midpoint schemes---the latter being second-order accurate---are provided in \Cref{app:altdisc}. The theoretical results established in \Cref{s:methods} extend naturally to these alternative discretizations due to an analogous structure of the underlying least-squares problems.

Applying the Backward Euler rule to discretize the VIDE \eqref{eq:vide}, evaluating the $~K$-integral using a composite right-point quadrature and the $~B$-integral using a composite left-point quadrature, and rearranging terms, the following forward update can readily be obtained:
\begin{align}\label{eq:time-march}
    ~\phi_{n+1} = \left[~I-\Delta t \,~R_{n+1} - {\Delta t \,^2} ~K_0 \right]^{-1}
    \left[~\phi_n + \Delta t \, ~g_{n+1} 
    + \Delta t \, ~B_{n+1}\widetilde{~\phi}_0 
    + \Delta t \,^2\sum_{k=0}^{n-1}~K_{n-k} ~\phi_{k+1} 
    \right. & \nonumber \\ & \hspace{-30mm} \left. 
     + \Delta t \,^2\sum_{k=0}^{n}~B_{n-k+1} \widetilde{~g}_k 
    \right].
\end{align}
Here, $~I \in \mathbb R^{d \times d}$ denotes the identity matrix. The discrete inverse problems are naturally derived from this forward update as described below.

Given initial condition $(~\phi_0, \widetilde{~\phi}_0)$, we assume that snapshots $\{~\phi_n\}_{n=0}^{N_T}$ and $\{\widetilde{~\phi}_n\}_{n=0}^{N_T}$ can be sampled using an appropriate ODE time integrator at uniformly spaced $N_T$ time points $\{t_n\}_{n=0}^{N_T}$, where $~\phi_n = ~\phi(t_n)$. The data is stored in matrices $\{~{\Phi}_n\}_{n=0}^{N_T}$ and $\{\widetilde{~{\Phi}}_n\}_{n=0}^{N_T}$, where $~{\Phi}_n \in \mathbb{R}^{N_s \times d}$ and $\widetilde{~{\Phi}}_n \in \mathbb{R}^{N_s \times \widetilde d}$. Each row of $~{\Phi}_n$ (respectively, $\widetilde{~{\Phi}}_n$) corresponds to one trajectory, and each column corresponds to a resolved (respectively, unresolved) variable. Thus, $(~{\Phi}_n)_{ij}$ and $(\widetilde{~{\Phi}}_n)_{ij}$ denote the $j$-th resolved and unresolved variables at time $t_n$ for the $i$-th trajectory. The forcing matrices $\{~G_n\}_{n=0}^{N_T}$ and $\{\widetilde{~G}_n\}_{n=0}^{N_T}$ are defined analogously.

\begin{defn}{(Discrete Full Data inverse problem)}\label{def3}
Given full observation data $\{\big(~\Phi_n, \widetilde{~\Phi}_n\big)\}_{n=0}^{N_T}$ and forcing data $\{(~G_n, \widetilde{~G}_n)\}_{n=0}^{N_T}$, the set of Markovian operators $\{~R_{n+1}\}_{n=0}^{N_T-1}$ and $\{\widetilde{~R}_{n+1}\}_{n=0}^{N_T-1}$ can be reconstructed by solving
\begin{subequations}\label{eq:def3}
\begin{align}\label{eq:def3a}
 \underset{~R_{n+1}, \widetilde{~R}_{n+1}}{\operatorname{argmin}} \; 
 \frac{\Delta t}{2N_s} \sum_{n=0}^{N_T-1} 
\left\| 
    \dot{~\Phi}_{n+1} 
    - ~G_{n+1}
    - ~\Phi_{n+1}~R_{n+1}^\top 
    - \widetilde{~\Phi}_{n+1}\widetilde{~R}^\top_{n+1} 
\right\|_F^2,
\end{align}    
where $~R^\top$ denotes the transpose of $~R$. Matrices $\{~K_{n}\}_{n=0}^{N_T-1}$ and $\{~B_{n+1}\}_{n=0}^{N_T-1}$ can subsequently be reconstructed by solving 
\begin{align}\label{eq:def3b}
     \underset{~K_{n}, ~B_{n+1}}{\operatorname{argmin}}\; 
     \frac{\Delta t}{2N_s} \sum_{n=0}^{N_T-1}  
\left\| 
    ~L_{n+1}  
    - \widetilde{~\Phi}_{0}{~B}^\top_{n+1}
    - \Delta t \sum_{k=0}^{n}  ~\Phi_{k+1} ~K_{n-k}^\top 
    - \Delta t \sum_{k=0}^{n} \widetilde{~G}_{k} ~B_{n-k+1}^\top 
\right\|_F^2 \,,
\end{align}
where $~L_{n+1} = \dot{~\Phi}_{n+1} - ~G_{n+1} - ~\Phi_{n+1}~R_{n+1}^\top$.
\end{subequations}
\end{defn}

\begin{defn}{(Discrete Partial Data inverse problem)}\label{def4}
Given partial observation data $\{~\Phi_n\}_{n=1}^{N_T}$ and $\widetilde{~\Phi}_0$, and forcing data $\{(~G_n, \widetilde{~G}_n)\}_{n=0}^{N_T}$, matrices $\{~R_{n+1}, ~K_{n}, ~B_{n+1}\}_{n=0}^{N_T-1}$ can be reconstructed by solving
\begin{align}\label{eq:def4}
    \underset{~R_{n+1}, ~K_{n}, ~B_{n+1}} {\operatorname{argmin}}\; \frac{\Delta t}{2N_s} \sum_{n=0}^{N_T-1} 
\left\| 
\dot{~\Phi}_{n+1} - ~G_{n+1} - ~\Phi_{n+1}~R_{n+1}^\top
- \widetilde{~\Phi}_{0} {~B}^\top_{n+1}
\right. & \nonumber \\ & \hspace{-40mm} \left. 
- \Delta t \sum_{k=0}^{n} ~\Phi_{k+1} ~K_{n-k}^\top 
- \Delta t \sum_{k=0}^{n} \widetilde{~G}_{k} ~B_{n-k+1}^\top 
\right\|_F^2.
\end{align}
\end{defn}

Once the entire set $\{~R_{n+1}, ~K_{n}, ~B_{n+1}\}_{n=0}^{N_T-1}$ of operators has been reconstructed, the forward update \eqref{eq:time-march} defines a ROM that can be used to predict trajectories for unseen initial conditions.

\section{Reconstruction algorithms}\label{s:methods}


In this section we develop methodology to efficiently solve the discretized inverse problems described in \Cref{s:formulation}. In the Full Data problem, the minimization step \eqref{eq:def3a} for $\{(~R_n,\widetilde{~R}_n)\}$ decouples over $n$ into $N_T$ separate independent problems. However, the problem \eqref{eq:def3b} does not, since the contribution to the objective from time step $n$ depends on all past states $\{~K_j,~B_{j+1}\}_{j\le n}$ of the kernels. In principle, one could recover these kernels by solving a single global least-squares problem that couples all time levels, but this becomes computationally prohibitive as the number of snapshots grows. Instead, we exploit the causal, lower-triangular structure of the objective function and introduce a \emph{greedy time-marching least-squares scheme}, which solves only for the current pair $(~K_n,~B_{n+1})$ at each step while treating previously computed operators as fixed. 
Because past estimates are never revisited, this scheme only recovers an approximation to the true global minimizer. The time-marching technique is also used to solve the global minimization problem \eqref{eq:def4} in the partial data problem. Here, the unknowns at each time step are $(~R_{n+1}, ~K_n, ~B_{n+1})$.

We show that under mild assumptions, with the greedy time-marching scheme, the Full Data problem is well-posed. In contrast, the Partial Data problem is ill-posed unless the dynamics is governed by a time-invariant full-state operator $~A$. For ill-posed cases, we stabilize reconstruction by employing a time-smoothing Tikhonov regularization for $~R$ and $~K$. 

For the Full Data problem, we also examine reconstruction and prediction under a finite-memory approximation, in which the memory kernel $~K$ is assumed to vanish beyond a prescribed time lag and is thus compactly supported in time. Additionally, we briefly consider reconstruction of a fully non-stationary memory kernel $~K(t,s)$, without the stationarity assumption $~K(t,s) = ~K(t-s)$, and show that the resulting inverse problem is fundamentally ill-posed due to rank deficiency of the least-squares operator.

\subsection{Reconstruction with full observation data}\label{sec:type1}
\subsubsection[Reconstruction of R]{Reconstruction of $~R$}

Given full observation data, i.e., snapshots of both the resolved and
unresolved trajectories in time for multiple initial conditions, we first solve the minimization problem
\eqref{eq:def3a} to reconstruct the operators $~R_{n+1}$ and
$\widetilde{~R}_{n+1}$ at each time step. Defining
$~X_{n+1}^R := [~R_{n+1} \; \widetilde{~R}_{n+1}]^\top \in \mathbb R^{N \times d}$, where $N = d + \widetilde d$ and $N_s$ denotes the number of initial conditions, \eqref{eq:def3a} decouples across time and is equivalent to solving $N_T$ separate minimization problems
\begin{align}\label{eq:Rn}
 \underset{~X_{n+1}^R}{\operatorname{argmin}} \;  
\left\| ~Z_{n+1}^R - ~F_{n+1}^R ~X_{n+1}^R\right\|_F^2,
\qquad n=0,1,\ldots, N_T-1,
\end{align}
with the known matrix $~Z_{n+1}^R = \dot{~\Phi}_{n+1} - ~G_{n+1} \in \mathbb R^{N_s \times d}$
and least-squares operator $~F_{n+1}^R := [~\Phi_{n+1} \; \widetilde{~\Phi}_{n+1}] \in \mathbb R^{N_s \times N}$.
Thus, each time level $t_{n+1}$ corresponds to an independent least-squares fit. Assuming a dense factorization, the cost of factoring the least-squares operator is $\mathcal{O}(N_s N^2)$ flops per time step.

For time-invariant $~A$, the operators $~R$ and $\widetilde{~R}$ are also time-invariant. In this case, instead of $N_T$ separate problems, \eqref{eq:def3a} reduces to a single global least-squares problem for $~X^R := [~R \;\; \widetilde{~R}]^\top
\in \mathbb R^{N \times d}$:
\begin{align}\label{eq:R}
\underset{~X^R}{\operatorname{argmin}} \left\|~Z^R - ~F^R~X^R\right\|_F^2,
\end{align}
with the known matrix and least-squares operator
\begin{align*}
~Z^R := \begin{bmatrix}
    \dot{~\Phi}_{1} - ~G_{1} \\ \vdots \\ \dot{~\Phi}_{N_T} - ~G_{N_T}
\end{bmatrix} \in \mathbb R^{N_s N_T\times d},
\quad
~F^R 
:= \begin{bmatrix}
    ~\Phi_{1} & \widetilde{~\Phi}_{1} \\ \vdots  & \vdots \\ ~\Phi_{N_T} & \widetilde{~\Phi}_{N_T} 
\end{bmatrix}\in \mathbb R^{N_s N_T \times N},
\end{align*}
obtained by stacking the block matrices $\{~Z_{n+1}^R\}$ and
$\{~F_{n+1}^R\}$ row-wise. This global formulation uses all time levels simultaneously, improving conditioning. Factoring costs $\mathcal{O}(N_s N_T N^2)$ flops.

\subsubsection[Reconstruction of K and B]{Reconstruction of $~K$ and $~B$}

After reconstructing $~R$ at all prescribed time points, we next estimate the kernels $\{~K_n\}_{n=0}^{N_T-1}$ and $\{~B_{n+1}\}_{n=0}^{N_T-1}$ by solving the discrete problem \eqref{eq:def3b}. Defining 
\[
~L^{KB} :=
\begin{bmatrix}
~L_{1}\\[2pt]
\vdots\\[2pt]
~L_{N_T}
\end{bmatrix}
\in \mathbb{R}^{N_s N_T\times d},
\qquad
~X^K :=
\begin{bmatrix}
~K_0^\top\\[2pt]
\vdots\\[2pt]
~K_{N_T-1}^\top
\end{bmatrix}
\in \mathbb{R}^{d N_T\times d},
\qquad
~X^B :=
\begin{bmatrix}
~B_1^\top\\[2pt]
\vdots\\[2pt]
~B_{N_T}^\top
\end{bmatrix}
\in \mathbb{R}^{\widetilde d N_T\times d},
\]
\eqref{eq:def3b} can be written in the simplified form
\begin{align}
    \underset{~X_K, ~X_B}{\operatorname{argmin}}\;
\frac{\Delta t}{2N_s}\,\big\|~L^{KB} - ~F^K ({~X^K})^\top - ~F^B ({~X^B})^\top\big\|_F^2.
\end{align}
where both
\[
~F_K
=
\Delta t
\begin{bmatrix}
~\Phi_1           & ~0               & \cdots & ~0\\[2pt]
~\Phi_2           & ~\Phi_1          & \ddots & \vdots\\
\vdots           & \vdots          & \ddots & ~0\\[2pt]
~\Phi_{N_T}       & ~\Phi_{N_T-1}    & \cdots & ~\Phi_1
\end{bmatrix},
\qquad
~F_B
=
\begin{bmatrix}
\widetilde{~\Phi}_0 + \Delta t\,\widetilde{~G}_0 & ~0                            & \cdots & ~0\\[2pt]
\Delta t\,\widetilde{~G}_1                  & \widetilde{~\Phi}_0 + \Delta t\,\widetilde{~G}_0 & \ddots & \vdots\\
\vdots                                 & \vdots                       & \ddots & ~0\\[2pt]
\Delta t\,\widetilde{~G}_{N_T-1}            & \cdots                       & \Delta t\,\widetilde{~G}_1 & \widetilde{~\Phi}_0 + \Delta t\,\widetilde{~G}_0
\end{bmatrix},
\]
have a block lower-triangular Toeplitz structure, with each block being a non-square, tall matrix. The combined least-squares operator $[~F_K\ ~F_B]$ has size $N_s N_T \times N N_T$, where $N = d + \widetilde d$. Treating this as a dense least-squares problem would require factoring a matrix whose cost scales like $\mathcal{O}(N_s N^2 N_T^3)$ flops and $\mathcal{O}(N_s N N_T^2)$ storage. Since forming and solving this large global system is prohibitively expensive, we exploit the causal, lower-triangular structure of $~F_K$ and $~F_B$ and use a greedy but approximate time-marching approach. At time step $n$, we solve only for the unknowns $~K_{n}$ and $~B_{n+1}$ through the least-squares problem
\begin{align}\label{eq:triangularKB}
\underset{~K_{n},~B_{n+1}}{\operatorname{argmin}}\;
\frac{\Delta t }{2N_s}
\left\| 
    ~L_{n+1}  
    - (\widetilde{~\Phi}_{0} + \Delta t \,\widetilde{~G}_0 ){~B}^\top_{n+1}
    - \Delta t \sum_{k=0}^{n}  ~\Phi_{k+1} ~K_{n-k}^\top 
    - \Delta t \sum_{k=1}^{n}  \widetilde{~G}_{k} ~B_{n-k+1}^\top 
\right\|_F^2,
\end{align}
keeping the terms $\{~K_{n-k}\}_{k\ge 1}$ that have already been reconstructed at earlier time steps fixed. Similarly, in the second sum all terms depend only on previously computed $\{~B_{n-k+1}\}$, and for $n=0$ this sum is empty. Defining $~X^{KB}_{n+1} := [\Delta t ~K_n \; ~B_{n+1}]^\top \in \mathbb{R}^{N \times d}$, the above problem can be written as 
\begin{align}\label{eq:KB}
        \underset{~X_{n+1}^{KB}}{\operatorname{argmin}} ||{~Z}_{n+1}^{KB} - ~F^{KB} {~X}_{n+1}^{KB}||_F^2, \qquad n=0,1,\ldots, N_T-1,
\end{align}
with least-squares operator ${~F}^{KB} := [~\Phi_{1} \;\; (\widetilde{~\Phi}_0 + \Delta t \,\widetilde{~G}_0)]
\in \mathbb R^{N_s \times N}$ and known matrix
\[
{~Z}_{n+1}^{KB} := ~L_{n+1}
- \Delta t\sum_{k=1}^{n} ~\Phi_{k+1}~K_{n-k}^{\top}  
- \Delta t\sum_{k=1}^{n} \widetilde{~G}_{k} ~B_{n-k+1}^\top
\in \mathbb R^{N_s \times d}.
\]
The time-independent least-squares operator $~F^{KB}$ can be factored once and reused for all $n$, with a factoring cost of $\mathcal{O}(N_s N^2)$ flops. Because past estimates $\{~K_j,~B_{j+1}\}_{j<n}$ are never revisited, this greedy scheme only recovers an approximation to the true minimizer of the global problem

The next theorem establishes the well-posedness of the discrete Full Data inverse problem and provides bounds on the condition numbers of the least-squares operators $~F^R_n$, $~F^R$, and $~F^{KB}$ for the zero forcing case.

\begin{theorem}\label{thm:type1}
    The Full Data inverse problem is well-posed under the following assumptions:
    \begin{enumerate}[label= (\roman*)]
        \item $~F_0 := [~\Phi_0 \; \widetilde{~\Phi}_0]$ has full column rank.
        \item $\sigma_{\min}(\widetilde{~\Phi}_0 + \Delta t \, \widetilde{~G}_0) > 0$.
        \item $\operatorname{range}(~\Phi_0) \,\cap \, \operatorname{range}(\widetilde{~\Phi}_0 + \Delta t \,\widetilde{~G}_0) = \{0\}$.
    \end{enumerate}
    Moreover, if $~G(t)$ and $\widetilde{~G}(t)$ are zero matrices, the condition numbers of least-squares operators in \eqref{eq:Rn}, \eqref{eq:R}, and \eqref{eq:KB} satisfy
    \begin{gather}
    \kappa_2(~F_{n+1}^R) \leq \kappa_2(~F_0) e^{\int_0^{t_{n+1}} \lambda_{\max}(~H(s))-\lambda_{\min}(~H(s))ds}, \quad
    \kappa_2(~F^R) \leq \kappa_2(~F_0) \sqrt{\frac{\sum_{n=0}^{N_T-1} e^{2\int_0^{t_{n+1}}\lambda_{\max}(~H(s))ds}}{ \sum_{n=0}^{N_T-1} e^{2\int_0^{t_{n+1}}\lambda_{\min}(~H(s))ds}}}, \nonumber \\ 
    \kappa_2({~F^{KB}}) \leq \kappa_2(~F_0) \frac{\max\{e^{\int_0^{t_{1}}~\lambda_{\max}(~H(s)) ds},1\}}{\min\{e^{\int_0^{t_{1}}~\lambda_{\min}(~H(s)) ds},1\}},
    \end{gather}
    where $\lambda_{\max}(~H)$ and $\lambda_{\min}(~H)$ are the extremal eigenvalues of the Hermitian part $~H = \frac{~A + ~A^\star}{2}$ of the full-state linear operator $~A$ in \eqref{eq:cont_dyn}.
\end{theorem}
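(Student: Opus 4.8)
The plan is to recast each reconstruction step as a linear least-squares problem $\min_{~X}\|~Z-~F~X\|_F^2$, whose minimizer is unique exactly when the operator $~F$ has full column rank; well-posedness then reduces to verifying this rank condition for the operators $~F^R_{n+1}$ (and the stacked $~F^R$) and for $~F^{KB}$. The object tying everything together is the full-state transition matrix $~\Psi(t,s)$ defined by $\partial_t ~\Psi(t,s)=~A(t)~\Psi(t,s)$, $~\Psi(s,s)=~I$, which is invertible for all $t,s$ and reduces to $e^{~A t}$ when $~A$ is constant. Writing each row of a snapshot matrix as a trajectory evolved from $t_0$, the homogeneous part of the dynamics gives $~F^R_{n+1}=~F_0\,~\Psi(t_{n+1},0)^\top$, so assumption~(i) together with invertibility of $~\Psi$ yields full column rank of $~F^R_{n+1}$ and hence of $~F^R$ (forcing enters only as an affine shift of the data, which I would argue does not reduce rank). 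For $~F^{KB}=[~\Phi_1\ \ \widetilde{~\Phi}_0+\Delta t\,\widetilde{~G}_0]$, the noise block has full column rank by~(ii), assumption~(iii) forces the two column spaces to intersect trivially, and the propagator relation expressing $~\Phi_1$ through $~\Phi_0$ and $\widetilde{~\Phi}_0$ closes the argument.

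For the condition-number bounds in the zero-forcing case I would first derive sharp singular-value estimates for the propagator by an energy argument: with $~y=~\Psi(t,0)~x$ and $~H=\tfrac{~A+~A^\star}{2}$, one has $\tfrac{d}{dt}\|~y\|_2^2=2\langle ~y,~H~y\rangle$, so $2\lambda_{\min}(~H)\|~y\|^2\le \tfrac{d}{dt}\|~y\|^2\le 2\lambda_{\max}(~H)\|~y\|^2$, and Grönwall's inequality gives $e^{\int_0^t\lambda_{\min}(~H)\,ds}\le \sigma_{\min}(~\Psi(t,0))\le \sigma_{\max}(~\Psi(t,0))\le e^{\int_0^t\lambda_{\max}(~H)\,ds}$. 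Combining $~F^R_{n+1}=~F_0\,~\Psi(t_{n+1},0)^\top$ with submultiplicativity of the extreme singular values under multiplication by an invertible matrix yields $\kappa_2(~F^R_{n+1})\le \kappa_2(~F_0)\,\kappa_2(~\Psi(t_{n+1},0))$, which is the first bound. For the global operator I would form the Gram matrix $(~F^R)^\top~F^R=\sum_n ~\Psi(t_{n+1},0)\,~F_0^\top~F_0\,~\Psi(t_{n+1},0)^\top$, sandwich $~F_0^\top~F_0$ between $\sigma_{\min}(~F_0)^2~I$ and $\sigma_{\max}(~F_0)^2~I$, bound the extreme eigenvalues of the resulting sum termwise using the propagator estimates, and take the ratio of square roots to obtain the second bound.

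The operator $~F^{KB}$ is where I expect the main difficulty. Writing $~\Psi(t_1,0)$ in block form with $(1,1)$ block $~\Psi_{11}$ and $(1,2)$ block $~\Psi_{12}$, the zero-forcing identity $~\Phi_1=~\Phi_0~\Psi_{11}^\top+\widetilde{~\Phi}_0~\Psi_{12}^\top$ lets me factor $~F^{KB}=~F_0~M$, where $~M$ is block lower triangular with diagonal blocks $~\Psi_{11}^\top$ and $~I$ and subdiagonal block $~\Psi_{12}^\top$, so that $\kappa_2(~F^{KB})\le \kappa_2(~F_0)\,\kappa_2(~M)$. The structure of $~M$ is exactly what the stated bound reflects: the identity block, inherited from carrying the unresolved initial data $\widetilde{~\Phi}_0$ forward unchanged, contributes the constants $1$ in the $\max$ and $\min$, while the single-step block $~\Psi_{11}$ contributes the factors $e^{\int_0^{t_1}\lambda_{\max/\min}(~H)\,ds}$ through the singular-value estimates above.

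The hard part will be collapsing $\kappa_2(~M)$ to the clean quotient $\max\{e^{\int_0^{t_1}\lambda_{\max}(~H)\,ds},1\}/\min\{e^{\int_0^{t_1}\lambda_{\min}(~H)\,ds},1\}$, because the off-diagonal block $~\Psi_{12}^\top$ encodes the resolved--unresolved coupling and can, in principle, inflate the conditioning of the mixed-time operator $[~\Phi_1\ \ \widetilde{~\Phi}_0]$ beyond the product of the two per-block bounds. My effort would therefore concentrate on bounding $\sigma_{\max}(~M)$ and $\sigma_{\min}(~M)$ by $\max\{\sigma_{\max}(~\Psi_{11}),1\}$ and $\min\{\sigma_{\min}(~\Psi_{11}),1\}$ respectively, which requires controlling this coupling block, exploiting the single-step regime $t_1=\Delta t$ together with the range-separation hypothesis~(iii); handling $~\Psi_{12}^\top$ cleanly is the crux on which the third estimate hinges.
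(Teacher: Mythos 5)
Your route coincides with the paper's own proof in all but one place. The uniqueness argument via the factorization $F^{R}_{n+1}=F_0\,\Psi(t_{n+1},0)^\top$ and rank preservation under the invertible propagator is exactly the paper's; the energy/Gr\"onwall estimate $e^{\int_0^t\lambda_{\min}(H)\,ds}\le\sigma_{\min}(\Psi(t,0))\le\sigma_{\max}(\Psi(t,0))\le e^{\int_0^t\lambda_{\max}(H)\,ds}$ is verbatim the paper's; submultiplicativity gives the first bound as in the paper; and your Gram-matrix sandwich for the stacked operator is the paper's variational characterization $\sigma(F^R)^2=\operatorname{ext}_{\|v\|=1}\sum_n\|F^R_{n+1}v\|_2^2$ in different clothing. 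Two smaller remarks: the paper's notion of well-posedness also includes stability of the minimizers under perturbations of the operator and data, which it obtains from a standard least-squares perturbation bound (\Cref{lemma:bound}); your proposal covers only existence/uniqueness via full column rank. Also, your parenthetical claim that forcing ``does not reduce rank'' is not safe as stated --- \Cref{remark:fullrank} of the paper shows rank can effectively collapse under forcing --- though this does not affect the zero-forcing condition-number bounds.

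The genuine gap is the one you flagged yourself: the $F^{KB}$ estimate. You correctly factor $F^{KB}=F_0 M$ with $M$ block lower triangular, diagonal blocks $\Psi_{11}^\top$ and $I$, and coupling block $\Psi_{12}^\top$, but you never establish $\sigma_{\max}(M)\le\max\{\sigma_{\max},1\}$ and $\sigma_{\min}(M)\ge\min\{\sigma_{\min},1\}$, and in fact no purely blockwise argument can: the coupling block genuinely inflates the conditioning. Concretely, take $N=2$, $d=\widetilde d=1$, $N_s=2$, zero forcing, $F_0=I_2$ (so (i)--(iii) hold and $\kappa_2(F_0)=1$), and constant $A$ with
\begin{equation*}
A\,t_1=\frac{\ln 2}{2}\,I+\frac{\pi}{4}\begin{pmatrix}0&1\\-1&0\end{pmatrix},
\qquad\text{so}\qquad
H=\frac{\ln 2}{2t_1}\,I,\qquad
\Psi(t_1)=e^{A t_1}=\begin{pmatrix}1&1\\-1&1\end{pmatrix}.
\end{equation*}
All singular values of $\Psi(t_1)$ equal $\sqrt2$ and the Gr\"onwall bounds are tight, yet $\Phi_1=(1,1)^\top$, $\widetilde\Phi_0=(0,1)^\top$ give
\begin{equation*}
F^{KB}=\begin{pmatrix}1&0\\1&1\end{pmatrix},
\qquad
\kappa_2(F^{KB})=\tfrac{3+\sqrt5}{2}\approx 2.62
\;>\;
\sqrt2=\kappa_2(F_0)\,\frac{\max\{e^{\int_0^{t_1}\lambda_{\max}(H)\,ds},1\}}{\min\{e^{\int_0^{t_1}\lambda_{\min}(H)\,ds},1\}}.
\end{equation*}
So the per-block strategy you sketched cannot be completed as stated; closing the estimate requires additional structure beyond (i)--(iii), e.g.\ quantitative near-orthogonality of $\operatorname{range}(\Phi_1)$ and $\operatorname{range}(\widetilde\Phi_0)$, which does hold in the small-$\Delta t$ regime where $\Psi_{12}=O(\Delta t)$, but only at the price of extra constants. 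It is worth noting that the paper's own proof dispatches this case with a single sentence (``follows using a similar argument''), so your identification of $\Psi_{12}^\top$ as the crux exposes a step the paper itself leaves unjustified; nevertheless, within this exercise, your proposal leaves the third bound unproven.
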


A detailed proof of the theorem is provided in \Cref{app:proof}. The key idea is to show that the least-squares operators $~F^R_{n+1}$ in \eqref{eq:Rn},  $~F^R$ in \eqref{eq:R}, and $~F^{KB}$ in \eqref{eq:KB} have full column rank under assumptions (i)--(iii), so the corresponding least-squares problems admit unique minimizers. Stability under small perturbations of the least-squares operator and known matrix then follows from standard perturbation results for overdetermined least-squares problems. The 2-norm condition number bounds are obtained by applying Grönwall's inequality to the full-state system and using the resulting growth estimates to bound the extremal singular values of the least-squares operators.

\begin{remark}{(Numerical ill-conditioning of $~F_{n+1}^R$)}\label{remark:illcond}
    Although \Cref{thm:type1} shows that the Full Data problem is well-posed, in many applications, the least-squares operator $~F_{n+1}^R$ may become numerically ill-conditioned as $n$ increases. To see this, consider a diffusion equation on $[0, 2\pi]$ with periodic boundary conditions discretized by the Fourier pseudospectral method with $N$ nodes. For the resulting dynamical system, 
    \[
        \lambda_{\min}(~H) = -\tfrac{N^2\mu}{4}, \qquad \lambda_{\max}(~H) = 0,
    \]
    where $\mu>0$ is the diffusion coefficient. The bound for $\kappa_2(~F_{n+1}^R)$ becomes
    \[
        \kappa_2(~F_{n+1}^R) \le \kappa_2(~F_0)\,e^{(\frac{\mu N^2}{4})t_{n+1}},
    \]
    which is tight since $~H$ and $~A$ share eigenvalues. The condition number thus grows exponentially in time with rate $\frac{\mu N^2}{4}$. In contrast, $~F^R$ remains well-conditioned, with the bound on its condition number given by:
    \[
        \kappa_2(~F^R) \leq \kappa_2(~F_0) \sqrt{N_T \frac{1-e^{-\frac{N^2\mu \Delta t}{2}}}{e^{-\frac{N^2\mu \Delta t}{2}}(1-e^{\frac{N^2\mu T}{2}})}} \sim \kappa_2(~F_0) \sqrt{\frac{N^2\mu T}{2}} \text{ as }  \Delta t \to 0.
    \]
 While this example assumes a constant diffusion coefficient, similar ill-conditioning can arise for a time-dependent $\mu(t)$, making the reconstruction of the time-dependent operator $~R(t)$ susceptible to numerical instability.
\end{remark}

\begin{remark}{(Full column rank requirement of $~G$)}\label{remark:fullrank}
    Although \Cref{thm:type1} does not explicitly require the forcing matrix $\bar{~G} = [~G \; \widetilde{~G}]$ to have full column rank, in problems like the one in the previous remark (with forcing added), this condition must hold to mitigate the ill-conditioning of $~F_{n+1}^R$ for large $n$. To see this, take $\bar{~G}$ to be constant. Then
    \[
        ~F_{n+1}^R 
        = e^{t_{n+1}~A^\top} ~F_0 
        + \int_0^{t_{n+1}} \bar{~G}\, e^{(t_{n+1}-s)~A^\top} ds
        = e^{t_{n+1}~A^\top} ~F_0 
        + \bar{~G}\, ~A^{-\top} (e^{t_{n+1}~A^\top} - ~I).
    \]
    As $n$ increases, $e^{t_{n+1}~A^\top} \to 0$ for a stable, invertible time-invariant operator $~A$, and thus $~F_{n+1}^R \sim -\,\bar{~G}\, ~A^{-\top}$, 
    and $\mathrm{rank}(~F_{n+1}) \leq \mathrm{rank}(\widetilde{~G})\;\mathrm{rank}(~A^{-\top})$.
\end{remark}

\subsubsection{Reconstruction under a finite-memory approximation}\label{sec:PH}

We now consider reconstruction under a finite-memory approximation, in which the memory kernel is assumed to vanish beyond a prescribed time lag so that the memory integrals have truncated support. For simplicity, we set $\bar {~g} = 0$. For the continuous problems \eqref{type:inv1b} and \eqref{eq:contpd}, this corresponds to replacing the first integral term by $\int_{\bar t_m}^t ~K(t-s) \phi^{(j)}(s) ds$ where $\bar t_m  = \max\{0, t-t_m\}$ and $t_m  > 0$. In the discrete setting, the finite-memory approximation leads to
\begin{align}\label{eq:KB_PH}
\underset{~K_{n},~B_{n+1}}{\operatorname{argmin}}\;
\frac{\Delta t }{2N_s} \sum_{n=0}^{N_T-1}
\left\| 
    ~L_{n+1}  
    - \widetilde{~\Phi}_{0} {~B}^\top_{n+1}
    -  \Delta t \sum_{k=\bar m}^{n}  ~\Phi_{k+1} ~K_{n-k}^\top 
\right\|_F^2,
\end{align}
where $\bar m = \max\{0, n-m\}$ with $m > 0$. Thus only the discrete lags $0,\dots,m$ of the memory kernel sequence $\{~K_n\}$ are retained, so $~K_n = 0$ for all $n>m$, and the discrete kernel has compact support in the time-lag index. The global unknowns are $\begin{bmatrix}
    ~B_1 & \cdots & ~B_{N_T} & \Delta t ~K_0 & \cdots \Delta t ~K_m
\end{bmatrix}^\top$, and the corresponding least-squares operator is $\begin{bmatrix}
    \operatorname{Diag}(~\Phi_0) & ~F^\text{trunc}
\end{bmatrix}$,
where 
\begin{align}\label{eq:block_trapezoidal}
    ~F^\text{trunc} = \begin{bmatrix}
        ~\Phi_1 \\
        ~\Phi_2 & ~\Phi_1 \\
        \vdots & \vdots  & \ddots \\
        ~\Phi_{m+1} & ~\Phi_m & \cdots & ~\Phi_1 \\
        \vdots & \cdots & \cdots & \vdots \\
        ~\Phi_{N_T} & ~ \Phi_{N_T-1} & \cdots & ~\Phi_{N_T-m+1} \\
    \end{bmatrix}
\end{align}
Due to the block-trapezoidal structure of \(~F^{\mathrm{trunc}}\), in contrast to the case for \eqref{eq:def3b}, a local least-squares solve over \(n\) neglects the final \(N_T-(m+1)\) rows, leading to biased or incomplete recovery. Accurate recovery thus requires a global solve of \eqref{eq:KB_PH}. Since forming \(~F^{\mathrm{trunc}}\) explicitly can be prohibitive in large-scale settings, we instead solve \eqref{eq:KB_PH} iteratively using a matrix-free implementation of the corresponding linear operator.

\begin{remark}{\textbf{(Reconstruction of a non-stationary memory kernel)}}
    In this remark, we briefly discuss solving \eqref{eq:def3b} after removing the stationarity assumption $~K(t,s) = ~K(t-s)$. For simplicity, we set $\bar {~g} = 0$ so that the second integral term disappears. With the fully non-stationary kernel $~K(t,s)$, \eqref{eq:def3b} changes to
\begin{subequations}
\begin{align}\label{eq:nonstat1}
     \underset{~K_{n}, ~B_{n+1}}{\operatorname{argmin}}\; 
     \frac{\Delta t}{2N_s} \sum_{n=0}^{N_T-1}  
\left\| 
    ~L_{n+1}  
    - \widetilde{~\Phi}_{0} {~B}^\top_{n+1}
    - \Delta t \sum_{k=0}^{n}  ~\Phi_{k+1} ~K_{nk}^\top 
\right\|_F^2 \,,
\end{align}
where $~K_{nk} = ~K(t_{n+1}, t_{k+1})$. Since the contributions to the objective are independent over $n$, \eqref{eq:nonstat1} decouples into $N_T$ separate least-squares problems:
\begin{align}\label{eq:nonstat2}
        \underset{~X_{n+1}}{\operatorname{argmin}} ||{~L}_{n+1} - ~F^{KB}_{n+1} {~X}_{n+1}||_F^2, \qquad n=0,1,\ldots, N_T-1,
\end{align}
where $~X_{n+1} = \begin{bmatrix}
    ~B_{n+1} & \Delta t ~K_{n+1,1} & \cdots & \Delta t ~K_{n+1,k+1} 
\end{bmatrix}^\top$ and $~F_{n+1}^{KB} = \begin{bmatrix}
    \widetilde{~\Phi}_0 & ~\Phi_1 & \cdots & ~\Phi_{n+1}
\end{bmatrix} $.
Since $\operatorname{range}(~\Phi_{n+1}) = \operatorname{range}(~\Phi_0)$ for all $n$, $~F_{n+1}^{KB}$ does not have full column rank for $n \ge 1$. Hence, the least-squares problem \eqref{eq:nonstat2} is not well-posed, since it does not admit a unique minimizer. 
\end{subequations}
\end{remark}

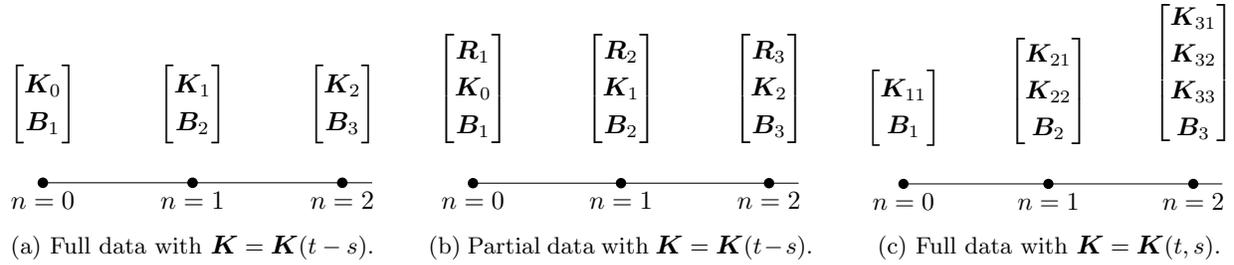
\begin{figure}[!h]
\centering

\begin{subfigure}[t]{0.31\textwidth}
\centering
\resizebox{\linewidth}{!}{%
\begin{tikzpicture}
  \draw (0,0) -- (4.4,0);
  \fill (0,0) circle (2pt) (2,0) circle (2pt) (4,0) circle (2pt);
  \node[below] at (0,0) {$n=0$};
  \node[below] at (2,0) {$n=1$};
  \node[below] at (4,0) {$n=2$};
  \node[above=4mm] at (0,0) {$\begin{bmatrix} ~K_0 \\ ~B_1 \end{bmatrix}$};
  \node[above=4mm] at (2,0) {$\begin{bmatrix} ~K_1 \\ ~B_2 \end{bmatrix}$};
  \node[above=4mm] at (4,0) {$\begin{bmatrix} ~K_2 \\ ~B_3 \end{bmatrix}$};
\end{tikzpicture}}
\caption{Full data with $~K = ~K(t-s)$.}
\end{subfigure}\hfill
\begin{subfigure}[t]{0.31\textwidth}
\centering
\resizebox{\linewidth}{!}{%
\begin{tikzpicture}
  \draw (0,0) -- (4.4,0);
  \fill (0,0) circle (2pt) (2,0) circle (2pt) (4,0) circle (2pt);
  \node[below] at (0,0) {$n=0$};
  \node[below] at (2,0) {$n=1$};
  \node[below] at (4,0) {$n=2$};
  \node[above=4mm] at (0,0) {$\begin{bmatrix} ~R_1\\ ~K_0 \\ ~B_1 \end{bmatrix}$};
  \node[above=4mm] at (2,0) {$\begin{bmatrix} ~R_2\\ ~K_1 \\ ~B_2 \end{bmatrix}$};
  \node[above=4mm] at (4,0) {$\begin{bmatrix} ~R_3\\ ~K_2 \\ ~B_3 \end{bmatrix}$};
\end{tikzpicture}}
\caption{Partial data with $~K = ~K(t-s)$.}
\end{subfigure}\hfill
\begin{subfigure}[t]{0.31\textwidth}
\centering
\resizebox{\linewidth}{!}{%
\begin{tikzpicture}
  \draw (0,0) -- (4.4,0);
  \fill (0,0) circle (2pt) (2,0) circle (2pt) (4,0) circle (2pt);
  \node[below] at (0,0) {$n=0$};
  \node[below] at (2,0) {$n=1$};
  \node[below] at (4,0) {$n=2$};
  \node[above=4mm] at (0,0) {$\begin{bmatrix} ~K_{11} \\ ~B_{1} \end{bmatrix}$};
  \node[above=4mm] at (2,0) {$\begin{bmatrix} ~K_{21} \\ ~K_{22} \\ ~B_{2} \end{bmatrix}$};
  \node[above=4mm] at (4,0) {$\begin{bmatrix} ~K_{31} \\ ~K_{32} \\ ~K_{33} \\ ~B_{3} \end{bmatrix}$};
\end{tikzpicture}}
\caption{Full data with $~K = ~K(t, s)$.}
\end{subfigure}
\caption{Unknowns to be reconstructed at the first three time steps in \eqref{eq:KB}, \eqref{eq:partialreform}, and \eqref{eq:nonstat2}, respectively.}
\end{figure}

\subsection{Reconstruction with partial data}\label{sec:type2}

For the case where only partial data are available, we use the Type 2 discrete inverse problem \eqref{eq:def4} to reconstruct the operators $\{(~R_{n+1},~K_n,~B_{n+1})\}_{n=0}^{N_T-1}$. As in the full-observation setting, we do not solve the globally coupled minimization \eqref{eq:def4} directly. Instead, we employ the greedy time-marching least-squares scheme introduced in \Cref{sec:type1}, updating only the current triple $(~R_{n+1},~K_n,~B_{n+1})$ at each time step while treating the previously reconstructed kernels $\{(~K_j,~B_{j+1})\}_{j<n}$ as fixed. At time step $n$, the minimization problem to be solve is
\begin{align}\label{eq:localpartial}
    \underset{~R_{n+1}, ~K_{n}, ~B_{n+1}} {\operatorname{argmin}}\; \frac{\Delta t}{2N_s}
\left\| 
\dot{~\Phi}_{n+1} - ~G_{n+1} - ~\Phi_{n+1}~R_{n+1}^\top
- (\widetilde{~\Phi}_{0} + \Delta t \,\widetilde{~G}_0 ){~B}^\top_{n+1}
\right. & \nonumber \\ & \hspace{-60mm}
\left. 
- \Delta t \sum_{k=0}^{n}  ~\Phi_{k+1} ~K_{n-k}^\top 
- \Delta t \sum_{k=1}^{n}  \widetilde{~G}_{k} ~B_{n-k+1}^\top 
\right\|_F^2.
\end{align}

\noindent With ${~X}_{n+1}^{RKB} := [~R_{n+1} \; \Delta t \, ~K_{n} \; ~B_{n+1}]^\top \in \mathbb R^{(2d+\widetilde d) \times d}$, we have the $N_T$ minimization problems
\begin{align}\label{eq:partialreform}
    \underset{{~X}_{n+1}}{\operatorname{argmin}}\; ||{~Z}_{n+1}^{RKB} - {~F}_{n+1}^{RKB} {~X}_{n+1}^{RKB} ||_F^2, \qquad n = 0, 1, \ldots, N_T -1,
\end{align}
where ${~Z}_{n+1}^{RKB} := \dot{~\Phi}_{n+1}- \Delta t \sum_{k=1}^{n} ~\Phi_{k+1}~K_{n-k}^{\top} - \Delta t \sum_{k=1}^{n} \widetilde{~G}_{k} ~B_{n-k+1}^\top \in \mathbb R^{N_s \times d}$, and  ${~F}_{n+1}^{RKB} := [~\Phi_{n+1}\; ~\Phi_{1} \; (\widetilde{~\Phi}_0+\Delta t \,\widetilde{~G}_0)] \in \mathbb R^{N_s \times (2d+\widetilde d)}$. 

Factoring $~F^{RKB}_{n+1}$ at each step costs $\mathcal{O}(N_s(2d+\widetilde d)^2)$ flops, so the overall cost of the sequential Type 2 reconstruction scales like $\mathcal{O}(N_T N_s (2d+\widetilde d)^2)$. 
Unlike in the full-observation setting, however, the problems \eqref{eq:partialreform}, in general, are not uniquely solvable. This is because the first two block columns of $~F^{RKB}_{n+1}$ are linearly dependent whenever all three operators $(~R_{n+1},~K_n,~B_{n+1})$ are independent unknowns. However, when $~R$ is treated constant, it can be absorbed into $~K_0$ by replacing this pair with the single operator $~R + \Delta t \,~K_0$. In this case, the block $~\Phi_1$ drops out of ${~F}_{n+1}^{RKB}$ and \eqref{eq:partialreform} becomes uniquely solvable. The following theorem makes this precise.

\begin{theorem}\label{thm:type2}
    Assume the conditions of \Cref{thm:type1} hold.
    \begin{enumerate}[label=(\roman*)]
        \item If $~R$ is constant in time, then, after combining $~R$ and
        $~K_0$ into the single operator $~R + \Delta t \,~K_0$ at $n=0$, each
        local least-squares problem \eqref{eq:partialreform} admits a unique
        minimizer.
        \item If $~R$ is time-varying, the local least-squares problems
        \eqref{eq:partialreform} do not admit unique minimizers.
    \end{enumerate}
\end{theorem}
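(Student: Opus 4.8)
The plan is to reduce the uniqueness of each local least-squares problem \eqref{eq:partialreform} to the full column rank of its operator: an overdetermined problem $\min_{X}\|Z - FX\|_F^2$ has a unique minimizer if and only if $F$ has full column rank. Thus the whole theorem becomes a statement about the block structure of $~F_{n+1}^{RKB} = [~\Phi_{n+1}\ ~\Phi_{1}\ (\widetilde{~\Phi}_0 + \Delta t\,\widetilde{~G}_0)]$, whose three block columns multiply the unknowns $~R_{n+1}$, $\Delta t\,~K_n$, and $~B_{n+1}$, respectively. The crucial structural fact, read off from the forward update \eqref{eq:time-march}, is that $~R$ and $~K_0$ enter the dynamics only through the combination $~R_{n+1} + \Delta t\,~K_0$ acting on the current snapshot $~\phi_{n+1}$ (the implicit term), whereas $~K_1,\dots,~K_n$ enter the explicit convolution separately.

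For part (i) I would proceed as follows. At $n=0$ we have $~\Phi_{n+1}=~\Phi_1$, so the first two block columns of $~F_1^{RKB}$ coincide and the operator is rank deficient; combining $~R$ and $~K_0$ into the single unknown $~M := ~R + \Delta t\,~K_0$ removes the redundant block and leaves the operator $~F^{KB} = [~\Phi_1\ (\widetilde{~\Phi}_0 + \Delta t\,\widetilde{~G}_0)]$, which has full column rank by \Cref{thm:type1} under assumptions (i)--(iii); hence the reduced $n=0$ problem has a unique minimizer. For $n\ge 1$, since $~R$ is constant, the term $~\Phi_{n+1}~R^\top$ and the $k=n$ convolution term $\Delta t\,~\Phi_{n+1}~K_0^\top$ both multiply $~\Phi_{n+1}$ and combine as $~\Phi_{n+1}~M^\top$; because $~M$ was already determined at $n=0$, this contribution is known and can be absorbed into ${~Z}_{n+1}^{RKB}$. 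The remaining unknowns $(~K_n,~B_{n+1})$ are then governed by the operator $~F^{KB}$, again of full column rank, so each local problem is uniquely solvable.

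For part (ii) I would show that the $n=0$ redundancy is now irreducible. The $n=0$ operator still has the repeated block $[~\Phi_1\ ~\Phi_1\ (\widetilde{~\Phi}_0 + \Delta t\,\widetilde{~G}_0)]$, so for any minimizer $(~R_1,\ \Delta t\,~K_0,\ ~B_1)$ and any $~C\in\mathbb{R}^{d\times d}$ the shifted triple $(~R_1+~C,\ \Delta t\,~K_0-~C,\ ~B_1)$ leaves the residual unchanged, exhibiting a $d^2$-parameter family of minimizers. Unlike the constant case, this ambiguity cannot be removed globally: a time-varying $~R$ makes $~R_{n+1}$ a genuinely new unknown at every step, and the separated kernel $~K_0$ is required to assemble ${~Z}_{n+1}^{RKB}$ for $n\ge 1$ through its $k=n$ convolution term, yet only the entangled combination $~R_1 + \Delta t\,~K_0$ is recoverable. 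Hence no global reduction to $~F^{KB}$ is available, and the local problems fail to admit unique minimizers.

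The main obstacle, and the step I would treat most carefully, is the $n\ge 1$ analysis in part (i): one must verify rigorously that, under the stationarity convention with a time-invariant $~R$, the operators $~R$ and $~K_0$ appear in each local problem only through the fixed combination $~M$ acting on $~\Phi_{n+1}$, so that the effective least-squares operator collapses to $~F^{KB}$ at every step. This hinges on the precise indexing of the discrete convolution in \eqref{eq:time-march} and on citing the full-rank property of $~F^{KB}$ from \Cref{thm:type1}; once this bookkeeping is in place, both the uniqueness and the non-uniqueness claims follow directly from the full-column-rank criterion.
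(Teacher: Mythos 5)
Your part~(i) is correct and is essentially the paper's argument, worked out in more detail. The paper disposes of this case in one sentence (``if $R$ is constant, the block $\Phi_1$ drops out of $F^{RKB}_{n+1}$, and under assumptions (i)--(iii) of \Cref{thm:type1} the resulting operators have full column rank''), and your bookkeeping supplies the justification behind that sentence: merging the implicit pair into $M := R + \Delta t\,K_0$ at $n=0$ yields the operator $F^{KB}=[\Phi_1 \;\; (\widetilde{\Phi}_0+\Delta t\,\widetilde{G}_0)]$, and at $n\ge 1$ the $k=n$ convolution term $\Delta t\,\Phi_{n+1}K_0^\top$ recombines with the Markovian term $\Phi_{n+1}R^\top$ into $\Phi_{n+1}M^\top$, which is known from step $0$ under the greedy scheme and moves to the data side, leaving again exactly $F^{KB}$, whose full column rank is guaranteed by \Cref{thm:type1}. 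This matches the paper's intent precisely.

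Part~(ii), however, has a genuine gap. The theorem asserts that the local problems \eqref{eq:partialreform} fail to admit unique minimizers, and the paper proves this for \emph{every} $n$ by a rank argument: it observes that the column spaces of the resolved snapshot blocks all agree with that of $\Phi_0$, so that $F^{RKB}_{n+1}=[\Phi_{n+1}\;\;\Phi_1\;\;(\widetilde{\Phi}_0+\Delta t\,\widetilde{G}_0)]$ is rank deficient whenever all three blocks are present. You establish non-uniqueness rigorously only at $n=0$, via the explicit shift family $(R_1+C,\ \Delta t\,K_0-C,\ B_1)$, and for $n\ge 1$ you substitute an informal propagation claim (the ambiguity in $K_0$ contaminates $Z^{RKB}_{n+1}$). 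That argument shows the greedy reconstruction is corrupted downstream, but it does not show what the statement claims, namely that the least-squares problem at each step $n\ge 1$ itself lacks a unique minimizer; and the point is not vacuous, since for $n\ge 1$ the blocks $\Phi_{n+1}$ and $\Phi_1$ are no longer identical, so the $n=0$ null direction does not transfer verbatim. The missing step is short: in the unforced setting every column of $\Phi_{n+1}$, $\Phi_1$, and $\widetilde{\Phi}_0$ lies in $\operatorname{range}(F_0)$, a subspace of $\mathbb{R}^{N_s}$ of dimension $N=d+\widetilde d$ by assumption (i) of \Cref{thm:type1}, while $F^{RKB}_{n+1}$ has $2d+\widetilde d = N+d$ columns; hence its rank is at most $N < N+d$, its null space has dimension at least $d$, and every local problem has infinitely many minimizers. (This containment of the three column spaces in $\operatorname{range}(F_0)$ is in fact all that is needed from the paper's stronger range-equality assertion.) With that dimension count added, your proof of part~(ii) closes and the whole argument coincides in substance with the paper's.
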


\begin{proof}
    Since $\operatorname{range} (~\Phi_{n+1}) = \operatorname{range} (~\Phi_1)
    = \operatorname{range} (~\Phi_0)$ for all $n$, the least-squares operator \newline
    ${~F}_{n+1}^{RKB} = [~\Phi_{n+1}\;\; ~\Phi_{1} \;\; (\widetilde{~\Phi}_0+\Delta t \,\widetilde{~G}_0)]$
    fails to have full column rank whenever all three blocks are present.
    Thus, for time-varying $~R$, each local problem has infinitely many
    minimizers. If $~R$ is constant, the block $~\Phi_1$ drops out of
    ${~F}_{n+1}^{RKB}$ and, under assumptions (i)--(iii) of \Cref{thm:type1},
    the resulting least-squares operators have full column rank, so the local least-squares problems are uniquely solvable.
\end{proof}

\subsubsection{Regularized problem}
To address the ill-posedness of the sequential solve for the Type 2 discrete inverse problem, we augment the objective in \eqref{eq:localpartial} with time-smoothing Tikhonov penalties on $~R$ and $~K$, penalizing the differences $~R_{n+1} - ~R_n$ and $~K_n - ~K_{n-1}$. This choice stabilizes the reconstruction and yields a unique, regularized least-squares solution at each time step. Given parameters $\lambda_{~R}, \lambda_{~K} > 0$, the regularized problem at time step $n \ge 1$ reads
\begin{align}\label{eq:localpartialreg}
\underset{~R_{n+1}, ~K_{n}, ~B_{n+1}} {\operatorname{argmin}}\; 
\frac{\Delta t}{2N_s}
\left\| 
\dot{~\Phi}_{n+1} - ~G_{n+1} - ~\Phi_{n+1}~R_{n+1}^\top
- (\widetilde{~\Phi}_{0} + \Delta t \, \; \widetilde{~G}_0 ){~B}^\top_{n+1}
- \Delta t \sum_{k=0}^{n}  ~\Phi_{k+1} ~K_{n-k}^\top 
\right. & \nonumber \\ & \hspace{-130mm}
\left. 
- \Delta t \sum_{k=1}^{n} \widetilde{~G}_{k} ~B_{n-k+1}^\top 
\right\|_F^2
+ \frac{\lambda_{~R}}{2} \| ~R_{n+1} - ~R_{n} \|_F^2
+ \frac{\lambda_{~K}}{2} \| ~K_{n} - ~K_{n-1} \|_F^2,
\end{align}
assuming $~R_{n}$ and $~K_{n-1}$ have already been computed at step $n-1$. The problem for step $n=0$ can be combined with the problem for $n=1$ into a
single system, since the regularization intrinsically couples the pairs $(~R_1,~R_2)$ and $(~K_0,~K_1)$. This leads to the following sequence of least-squares problems:

\begin{itemize}
\begin{subequations}
\item For $n=0,1$, define 
$~X_1^\text{reg} = \begin{bmatrix}
    ~R_{1} & ~R_{2} & \Delta t \,~K_0 & \Delta t \,~K_1 &
    ~B_{1} & ~B_{2} 
    \end{bmatrix}^\top$
and solve
    \begin{align}\label{eq:reg_solve_1}
    \underset{~X_1^\text{reg}}{\operatorname{argmin}}\;
    \left\|~Z_1^\text{reg} - ~F_1^\text{reg}~X_1^\text{reg}\right\|_F^2,
    \end{align}    
with known matrix
$
~Z_1^\text{reg} := \begin{bmatrix}
    (\dot{~\Phi}_{1} - ~G_{1})^\top 
    & (\dot{~\Phi}_{2} - ~G_{2})^\top & ~0 & ~0
\end{bmatrix}^\top, 
$
and least-squares operator
\begin{gather*}
~F_1^\text{reg} := \begin{bmatrix}
    ~\Phi_{1} & 0 & ~\Phi_{1} & ~0 & \widetilde{~\Phi}_{0}+\Delta t \,\widetilde{~G}_0 & ~0\\
    ~0 & ~\Phi_{2} & ~\Phi_{2} & ~\Phi_{1} & \Delta t \,\widetilde{~G}_1 & \widetilde{~\Phi}_{0}+\Delta t \, \widetilde{~G}_0\\
    -\sqrt{\lambda_{~R}}\,~I & \sqrt{\lambda_{~R}}\,~I & ~0 & ~0 & ~0 & ~0\\
    ~0 & ~0 & -\sqrt{\lambda_{~K}}\,~I & \sqrt{\lambda_{~K}}\,~I & ~0 & ~0
\end{bmatrix}.
\end{gather*}
\item For $n > 1$, define
$~X_n^\text{reg} := \begin{bmatrix} ~R_{n+1}  & \Delta t \,~K_n & ~B_{n+1}\end{bmatrix}^\top$
and solve
\begin{align}\label{eq:reg_solve_2}
\underset{~X_n^\text{reg}}{\operatorname{argmin}}\;
\left\| ~Z_n^\text{reg} - ~F_n^\text{reg} ~X_n^\text{reg} \right\|_F^2,
\end{align}
where 
\begin{align*}
~Z_n^\text{reg} := \begin{bmatrix}
 ~J_{n+1}\\
\sqrt{\lambda_{~R}}~R_{n}\\
\sqrt{\lambda_{~K}}~K_{n-1}
\end{bmatrix},
\quad
~F_n^\text{reg} := \begin{bmatrix}
~\Phi_{n+1} & ~\Phi_{1} & \widetilde{~\Phi}_{0} + \Delta t \, \widetilde{~G}_0\\
\sqrt{\lambda_{~R}}\, ~I & ~0 & ~0\\
~0 & \sqrt{\lambda_{~K}}\, ~I & ~0
\end{bmatrix},
\end{align*}    
and $~J_{n+1} = \dot{~\Phi}_{n+1} - ~G_{n+1}
- \Delta t \sum_{k=1}^{n}  ~\Phi_{k+1} ~K_{n-k}^\top
- \Delta t \sum_{k=1}^{n}  \widetilde{~G}_{k} ~B_{n-k+1}^\top$.
\end{subequations}
\end{itemize}
If $~F_0$ has full column rank, the matrices ${~F_1^\text{reg}}^\top ~F_1^\text{reg}$ and
${~F_n^\text{reg}}^\top ~F_n^\text{reg}$ are positive definite for $\lambda_{~R}, \lambda_{~K} > 0$, and thus invertible. Hence, both \eqref{eq:reg_solve_1} and \eqref{eq:reg_solve_2} admit unique minimizers.

\section{Results}\label{s:results}

We evaluate the accuracy and performance of the proposed model reduction framework through computational experiments, reconstructing operators in \eqref{eq:vide} from synthetically generated data, and predicting resolved dynamics for unseen initial conditions. The experiments focus on linear dynamical systems obtained after spatially discretizing partial differential equations (PDEs). We choose a small subset of the degrees of freedom of the system as the resolved variables. The goal of the experiments is to accurately predict the trajectories of the resolved variables through the MZ model \eqref{eq:vide}, with operators reconstructed from full and partial observation data. 

We consider two parametric PDEs on a one-dimensional periodic domain: a reaction-diffusion-advection equation and a damped wave equation (discussed in \Cref{app:wave}), and generate multiple test cases by varying the PDE parameters. To ensure that the initial data matrix $~F_0 \in \mathbb{R}^{N_s \times N}$ has full column rank, we generate $N_s \ge N$ independent smooth initial conditions for training using the following procedure:
\begin{enumerate}
  \item For each $s = 1, \ldots, N_s$, set
  \[
  ~u^s = \sum_{m=0}^{N/2} \frac{a_m^s}{1+m^2} \cos(m ~x -  \phi_m^s),
  \qquad a_m^s \sim U(0,1), \quad \phi_m^s \sim U(0, 2\pi),
  \]
  where $~x \in \mathbb R^N$ is the vector of spatial grid points.
  \item Stack all $~u^s$ as rows of $~F_0$.
\end{enumerate}

Here, $N$ is even, and $U(a,b)$ denotes the uniform distribution on $[a,b]$. A low-pass filter is applied to dampen the high-frequency modes by dividing the amplitudes $a_m^s$ by $1+m^2$, which yields spatially smooth initial data. The testing data are generated independently using the same procedure. Once the matrices of initial conditions are obtained, the linear dynamical systems are integrated in time using an adaptive high-order Runge-Kutta scheme (the RK45 implementation in SciPy's \texttt{solve\_ivp}) with a prescribed solver tolerance $\tau_{\text{sol}} = 10^{-12}$ for all experiments. After appropriate projections, this yields the datasets $\{\Phi(t_n)\}_n$ and $\{\widetilde{\Phi}(t_n)\}_n$ needed for operator reconstructions. Operators are reconstructed in both full and partial observation data regimes, with results reported in the latter only for ill-posed cases. For all experiments, we employ the second-order accurate schemes of \Cref{sec:impmid} for reconstruction and subsequent trajectory prediction.

For reconstruction with full observation data, we verify the second-order convergence of the $~K$ and $~R$ operators on a sequence of temporal discretizations. The error metrics considered are 
\begin{itemize}
  \item Relative Frobenius error in $~K$ over $[0,T]$:
  \begin{align}\label{eq:errK}
       \mathcal E_{~K} = \sqrt{\frac{\int_0^T \|~K(t) - ~K^\text{true} (t)\|_F^2 \,dt}
    {\int_0^T \|~K^\text{true} (t)\|_F^2 \,dt}},
  \end{align}
  where $~K$ and $~K^{\text{true}}$ denote the reconstructed and ground-truth operators, respectively. Since an analytical form of $~K^\text{true}$ is not available when the PDE parameters are time-varying, we replace $~K^\text{true}$ by $~K$ reconstructed at the finest temporal resolution, yielding a self-convergence error metric.

    \item Relative Frobenius error in $~R$ over $[0,T]$:
  \begin{align}\label{eq:errR}
    \mathcal E_{~R} =
    \sqrt{\frac{\int_0^T \|~R(t) - ~R^\text{true}(t)\|_F^2 \,dt}
    {\int_0^T \|~R^\text{true}(t)\|_F^2 \,dt}}.
  \end{align}

  \item Relative Frobenius error in $~\Phi^\text{pred}$ over $[0,T]$:
\begin{align}\label{eq:errphi}
    \mathcal E_{~\Phi^\text{pred}} = \sqrt{\frac{\int_0^T \|~\Phi^\text{pred}(t) - ~\Phi(t)\|_F^2 \,dt}
  {\int_0^T \|~\Phi(t)\|_F^2 \,dt}},
\end{align}
where $~\Phi^\text{pred}$ contains the trajectories of resolved variables for test initial conditions obtained by time-integrating \eqref{eq:vide} after reconstructing the operators.
\end{itemize}
The time integrals in \eqref{eq:errK}-\eqref{eq:errphi} are approximated by the same numerical quadrature used to integrate the noise and memory terms in \eqref{eq:vide}. For the reconstruction of the memory kernel under a finite-memory approximation, we solve the least-squares problem \eqref{eq:KB_PH} matrix-free and iteratively using SciPy's \texttt{lsqr} solver. This solver employs an iterative Krylov subspace method based on the Golub-Kahan bidiagonalization process \cite{paige1982lsqr} and efficiently minimizes the residual and solution norms for large sparse least-squares problems.

\noindent Below, we present a brief summary of the results presented in the various tables and figures of this section:

\begin{itemize}

\item \Cref{tab:pde1} lists the eight test cases generated by varying the PDE parameters—the diffusion coefficient \(\mu(t)\), velocity \(v(t)\), reaction strength \(a(t)\), and forcing \(\bar g(t,x)\). Both time-varying and constant parameters are considered.

\item \Cref{tab:eq_1_conv} reports the reconstruction errors $\mathcal E_{~R}$ and $\mathcal E_{~K}$, showing second-order temporal convergence, together with the prediction errors $\mathcal E_{~\Phi^\text{pred}}$, for all the tested cases and various time step sizes, with operators reconstructed from full observation data. The corresponding true and predicted trajectories are shown in \Cref{fig:pde1}.

\item \Cref{tab:eq1_PH} reports the prediction errors for the first six test cases (no forcing) for varying choices of the finite-memory approximation parameter $m$, with operators reconstructed from full observation data. The time evolution of $\|~K\|_F$ and the predicted trajectories for different choices of $m$ are shown in \Cref{fig:pde_PH}.

\item \Cref{tab:eq1_reg} reports the prediction errors $\mathcal E_{~\Phi^\text{pred}}$, with operators reconstructed from partial observation data. Results are shown only for ill-posed cases for different combinations of the time-smoothing regularization parameters $\lambda_{~R}$ and $\lambda_{~K}$.
\end{itemize}

\begin{table}[h!]
\centering
\caption{Different cases and parameter sets for the reaction-diffusion-advection equation. $\mu(t)$ is the diffusion coefficient, $v(t)$ is velocity, $a(t)$ is reaction strength, and $\bar g(t, x)$ is prescribed forcing.}
\label{tab:pde1}
\begin{tabular}{|c|c|c|c|c|l|}
\hline
\textbf{Case} & {$\mu(t)$} & {$v(t)$} & {$a(t)$} & {$\bar{g}(t,x)$} & \textbf{Description}\\
\hline
\textbf{(a)} & 0 & 1 & 0 & 0 & Advection only\\
\hline
\textbf{(b)} & 0.1 & 1 & 0 & 0 & Advection dominated\\
& & & & & without reaction\\
\hline
\textbf{(c)} & 0.05 & 1 & 1 & 0 & Reaction-diffusion-advection\\
\hline
\textbf{(d)} & 2 & 0.5 & 0 & 0 & Diffusion dominated\\
& & & & & without reaction\\
\hline
\textbf{(e)} & $0.01\cos^{2}(2t)$ & $1 + \sin^{2}(5t)$ & $-0.5\cos t$ & 0 &
Time-dependent \\
& & & & & reaction-diffusion-advection\\
\hline
\textbf{(f)} & 0 & $0.5 + \sin(4t)$ & $\cos^{2}(5t)$ & 0 &
Time-dependent\\
& & & & & reaction-advection\\
\hline
\textbf{(g)} & 2 & 0 & 0 & 0.001 &
Constant diffusion\\
& & & & & with forcing\\
\hline
\textbf{(h)} & $2 + 0.25\sin t + 0.1\cos 10t$ & 0 & 0 &
$\dfrac{u_0(x)}{1+t}$ &
Time-dependent diffusion\\
& & & & & with parametric forcing\\
\hline
\end{tabular}
\end{table}

\subsection{Reaction-diffusion-advection equation}\label{sec:pde1}
We consider the reaction-diffusion-advection equation with periodic boundary conditions:
\begin{equation}\label{eq:pde1}
  \left \{ \begin{aligned}
    &\partial_t u (t,x) = v(t) \partial_x u (t, x) + \mu (t) \partial_{xx} u(t, x) + a(t) u (t, x) + \bar g(t, x), \quad && t \in [0, T], \quad && x \in [0, 2\pi], \\
    & u(0, x) = u_0(x), \quad && x \in [0, 2\pi], \\
    & u(t, 0) = u(t, 2\pi), \quad && t \in [0, T],
  \end{aligned} \right.
\end{equation}
where $\mu(t)$ is the diffusion coefficient, $v(t)$ is the advection velocity, $a(t)$ is the reaction strength, and $\bar g(t, x)$ is a $2\pi$-periodic prescribed forcing term. We discretize \eqref{eq:pde1} in space using the Fourier pseudospectral method
on the uniform grid $\{x_j = j\Delta x \}_{j=0}^{N-1}$ with
$\Delta x = 2\pi/N$, which yields the following semi-discrete system of ODEs:
\begin{equation}  \label{eq:ode1}
\frac{d~{u}}{dt} = \left[ {\mu}(t) ~{D}^2 + {v}(t) ~{D} + {a(t)}~I  \right] ~{u} + \bar{~g}(t).
\end{equation}
Here, $~{u} = [ u_0(t), \dots, u_{N-1}(t)]^{\top}$, $\bar {~g} = [\bar g(t, x_0), \dots, \bar g(t, x_{N-1})]^\top$, $~{D}$ denotes the spectral differential matrix, and $~I$ is the identity. The terminal time is $T = 5$. We set $N = 30$ and define the resolved variables as $~\phi = [u_5, u_{10}, u_{15}, u_{20}, u_{25}] \in \mathbb R^5$, with the remaining components of $~u$ forming $\widetilde{~\phi}$, obtaining operators $~P$ and $\widetilde{~P}$. The goal of the experiment is to verify that the reconstructed operators yield accurate trajectories of the resolved variables as the PDE parameters are varied. 

Varying parameters, we construct eight test cases summarized in \Cref{tab:pde1}. Cases (a)-(f) have zero forcing; among these, (a)-(d) use time-independent parameters, while (e)-(f) use time-varying parameters.
For reasons described in \Cref{remark:illcond}, the least-squares problem \eqref{eq:Rn} can become numerically ill-conditioned when $\mu(t)$ is time-dependent and large. In that regime, the rank of the design matrix $~F_{n+1}^{R}$ can effectively collapse in time unless a stabilizing forcing term is added. As explained in \Cref{remark:fullrank}, the sampled forcing matrix $\bar {~G} \in \mathbb R^{N_s \times N}$ must have full column rank.
Therefore, in case (h), which combines nonzero forcing with a large time-dependent diffusion coefficient, we construct $\bar g(x, t)$ as a function of the initial condition so that $\bar{~G}$ inherits full column rank from the sampled initial conditions. The only other case with nonzero forcing is (g); there, $v = a = 0$ and diffusion is constant; full column rank of $\bar {~G}$ is not required. 

We generate $N_s = 35$ independent initial conditions for training and 15 for testing. The solver \texttt{solve\_ivp} produces dense interpolants, which are used to assemble the training data matrices ${~\Phi_n}$ and ${\widetilde{~\Phi}_n}$ over several temporal discretizations. For each discretization, once $~R$, $~K$, and $~B$ are reconstructed with full observation data through \eqref{eq:def3}, trajectories for test initial conditions are predicted via \eqref{eq:implicit_midpoint}. The relative errors of \eqref{eq:errK}, \eqref{eq:errR}, and \eqref{eq:errphi} are evaluated and reported in \Cref{tab:eq_1_conv}. The results confirm second-order convergence of the reconstructed operators $~K$ and $~R$ and show that the errors in $~\Phi^\text{pred}$ are small and consistent with the solver tolerance $\tau_\text{sol}$ used to generate training data. For cases with time-varying parameters, a reported error of zero for $~K$ on the finest time grid indicates that the reconstructed operator at that $\Delta t$ was used in place of the analytically unavailable $~K^\text{true}$ to compute the error. The use of a stationary kernel does not lead to noticeable error in $~\Phi^\text{pred}$ for these cases.

\begin{table}
  \centering
  \caption{Relative errors in the predicted trajectories $~\Phi^\text{pred}$ and operators $~R$ and $~K$ for various $\Delta t$ across all eight reaction-diffusion-advection test cases. Both $~K$ and $~R$ exhibit second-order convergence and $~\Phi^\text{pred} \sim \tau_{sol}$. A zero error for $~K$ indicates it was used in \eqref{eq:errK} because $~K^\text{true}$ was unavailable. Operators were reconstructed with full data.}
  \label{tab:eq_1_conv}
  \resizebox{\textwidth}{!}{%
  \begin{tabular}{
    c
    S[table-format=1.4e-2]
    S[table-format=1.2e-03]
    S[table-format=1.2e-03]
    S[table-format=1.2]
    S[table-format=1.2e-03]
    S[table-format=1.2]
  }
    \toprule
    Case & {$\Delta t$} & {Error in $~\Phi^\text{pred}$} & {Error in $~K$} & {Rate} & {Error in $~R$} & {Rate}\\
    \midrule
    (a) & 3.1250e-02  & 1.78e-11 & 4.57e-02 & {---} & 1.82e-02 & {---} \\
        & 1.5625e-02  & 1.78e-11 & 1.13e-02 & 2.01 & 4.49e-03 & 2.02 \\
        & 7.8125e-03  & 1.78e-11 & 2.83e-03 & 2.00 & 1.12e-03 & 2.01 \\
    \midrule
    (b) & 3.1250e-02  & 1.70e-12 & 1.85e-02 & {---} & 2.11e-02 & {---} \\
        & 1.5625e-02  & 1.71e-12 & 4.50e-03 & 2.04 & 5.44e-03 & 1.95 \\
        & 7.8125e-03  & 1.69e-12 & 1.10e-03 & 2.03 & 1.37e-03 & 1.99 \\
    \midrule
    (c) & 3.1250e-02  & 1.95e-12 & 6.56e-03 & {---} & 3.06e-02 & {---} \\
        & 1.5625e-02  & 1.96e-12 & 1.64e-03 & 2.00 & 7.59e-03 & 2.01 \\
        & 7.8125e-03  & 1.96e-12 & 4.10e-04 & 2.00 & 1.89e-03 & 2.00 \\
    \midrule
    (d) & 1.9531e-03  & 4.62e-12 & 1.71e-02 & {---} & 2.39e-02 & {---} \\
        & 9.7656e-04  & 4.63e-12 & 4.64e-03 & 1.88 & 6.18e-03 & 1.95 \\
        & 4.8828e-04  & 4.63e-12 & 1.20e-03 & 1.95 & 1.56e-03 & 1.99 \\
    \midrule
    (e) & 1.5625e-02  & 1.13e-11 & 1.52e-02 & {---} & 1.39e-02 & {---} \\
        & 7.8125e-03  & 1.13e-11 & 3.73e-03 & 2.03 & 3.43e-03 & 2.02 \\
        & 3.9063e-03  & 1.13e-11 & 7.64e-04 & 2.29  & 8.56e-04 & 2.00  \\
        & 1.9531e-03  & 1.13e-11 & 0.00e+00 & {---} & 2.14e-04 & 2.00 \\
    \midrule
    (f) & 1.5625e-02  & 1.36e-11 & 2.93e-02 & {---} & 6.82e-03 & {---} \\
        & 7.8125e-03  & 1.36e-11 & 7.14e-03 & 2.03  & 1.69e-03 & 2.01 \\
        & 3.9063e-03  & 1.35e-11 & 1.44e-03 & 2.30  & 4.22e-04 & 2.00 \\
        & 1.9531e-03  & 1.35e-11 & 0.00e+00 & {---} & 1.05e-04 & 2.00 \\
    \midrule
    (g) & 1.9531e-03  & 4.18e-12 & 1.71e-02 & {---} & 2.40e-02 & {---} \\
        & 9.7656e-04  & 4.21e-12 & 4.64e-03 & 1.88  & 6.19e-03 & 1.95 \\
        & 4.8828e-04  & 4.45e-12 & 1.20e-03 & 1.95  & 1.56e-03 & 1.99 \\
    \midrule
    (h) & 3.9063e-03  & 2.67e-12 & 3.63e-01 & {---} & 5.18e-03 & {---} \\
        & 1.9531e-03  & 3.14e-12 & 1.34e-01 & 1.44  & 1.44e-03 & 1.85 \\
        & 9.7656e-04  & 9.08e-12 & 3.39e-02 & 1.98  & 3.71e-04 & 1.96  \\
        & 4.8828e-04  & 1.80e-11 & 0.00e+00 & {---} & 9.34e-05 & 1.99 \\
  \end{tabular}
  }
\end{table}

\Cref{fig:pde1} presents the trajectories of the resolved variables for a single test initial condition and all eight cases. The predicted trajectories, computed through \eqref{eq:vide}, with reconstructed operators, are shown as solid colored curves, while the true trajectories, computed through \eqref{eq:ode1} and subsequently projected, are shown as dashed black curves. The plots demonstrate close agreement between the predicted and true trajectories.

Next, we consider cases (a)-(e) and study prediction under a finite-memory approximation of the memory kernel $~K$, obtained by solving the least-squares problem \eqref{eq:KB_PH} after having computed $~R$. The time step is fixed at $\Delta t = 0.03125$, and the finite-memory approximation parameter $m$ is set sequentially to $0.1N_T$, $0.2N_T$, $0.3N_T$, $0.5N_T$, and $N_T$. For a given $m$, only contributions from time lags up to $t_m = m \Delta t$ are retained by setting $~K_n = 0$ for all indices $n > m$, so that the discrete kernel sequence $\{~K_n\}_{n=0}^{N_T}$ is effectively compactly supported on $\{0,\dots,m\}$, where the support varies from $10\%$ (shortest memory) to $100\%$ (full memory).

We perform 500 iterations of \texttt{lsqr} and report the error in $~\Phi^{\text{pred}}$ for each value of $m$. The results in \Cref{tab:eq1_PH} show that, for cases (b), (d), and (e)---all of which have nonzero diffusion coefficients--the finite-memory approximation yields reasonably small errors. In particular, for case (d) the error is below $0.1\%$ even with only $10\%$ memory support. For the other two cases, the error is approximately $2\%$ at $10\%$ memory support and decreases as the support increases. In contrast, for cases (c) and (f), which have a large reaction coefficient and small or no diffusion, truncating the memory support leads to large errors; for case (f), the error exceeds $100\%$ for all $m < N_T$. In some cases, the errors are not comparable to $\tau_\text{sol}$ even with full memory because 500 iterations are insufficient for the optimization error to vanish. Nevertheless, they remain small enough for the purposes of this experiment.

\Cref{fig:pde_PH} shows the time evolution of the norm of the reconstructed kernel $~K$ together with the trajectory of the first resolved variable for all values of $m$. In cases (b), (d), and (e), $\lVert~K\rVert_F$ decays to zero in time, and correspondingly the trajectories remain reasonably accurate even with short memory. In contrast, for $m < 0.5N_T$, the trajectories in cases (a) and (c) deviate noticeably from the true trajectories due to the non-decaying kernels. In case (f), large errors arise for all values of $m$, as $\lVert~K\rVert_F$ grows in time to a large magnitude. 

Finally, we examine reconstruction from partial data, focusing on cases (e) and (f), for which the underlying inverse problem is ill-posed due to time-varying PDE parameters. We recover $~R$, $~K$, and $~B$ using the regularized formulation \eqref{eq:localpartialreg} for different choices of the regularization parameters. Specifically, we set $\Delta t = 0.015625$ and record the relative error in $~\Phi^\text{pred}$ for parameter pairs $(\lambda_{~R}, \lambda_{~K})$, varying each parameter over the set $\{0, 10^{-8}, 10^{-4}, 10^{-2}, 1\}$.  The results, shown in \Cref{tab:eq1_reg}, reveal that operators reconstructed without regularization do not yield accurate solutions. When $\lambda_{~R} = \lambda_{~K} = 10^{-8}$, the relative error in the solution is small and consistent with $\tau_\text{sol}$. The errors are larger but still reasonable (on the order of $10^{-6}$) when $\lambda_{~K}=0$ and $\lambda_{~R} > 0$. This is because the regularized problem \eqref{eq:localpartialreg} does not strictly require $\lambda_{~K} > 0$ to admit a unique minimizer. Increasing $\lambda_{~K}$ above $10^{-8}$ degrades the accuracy, with errors exceeding $100\%$ unless $\lambda_{~R} \ge \lambda_{~K}$.

\begin{figure}
    \centering
    \includegraphics[width=\linewidth]{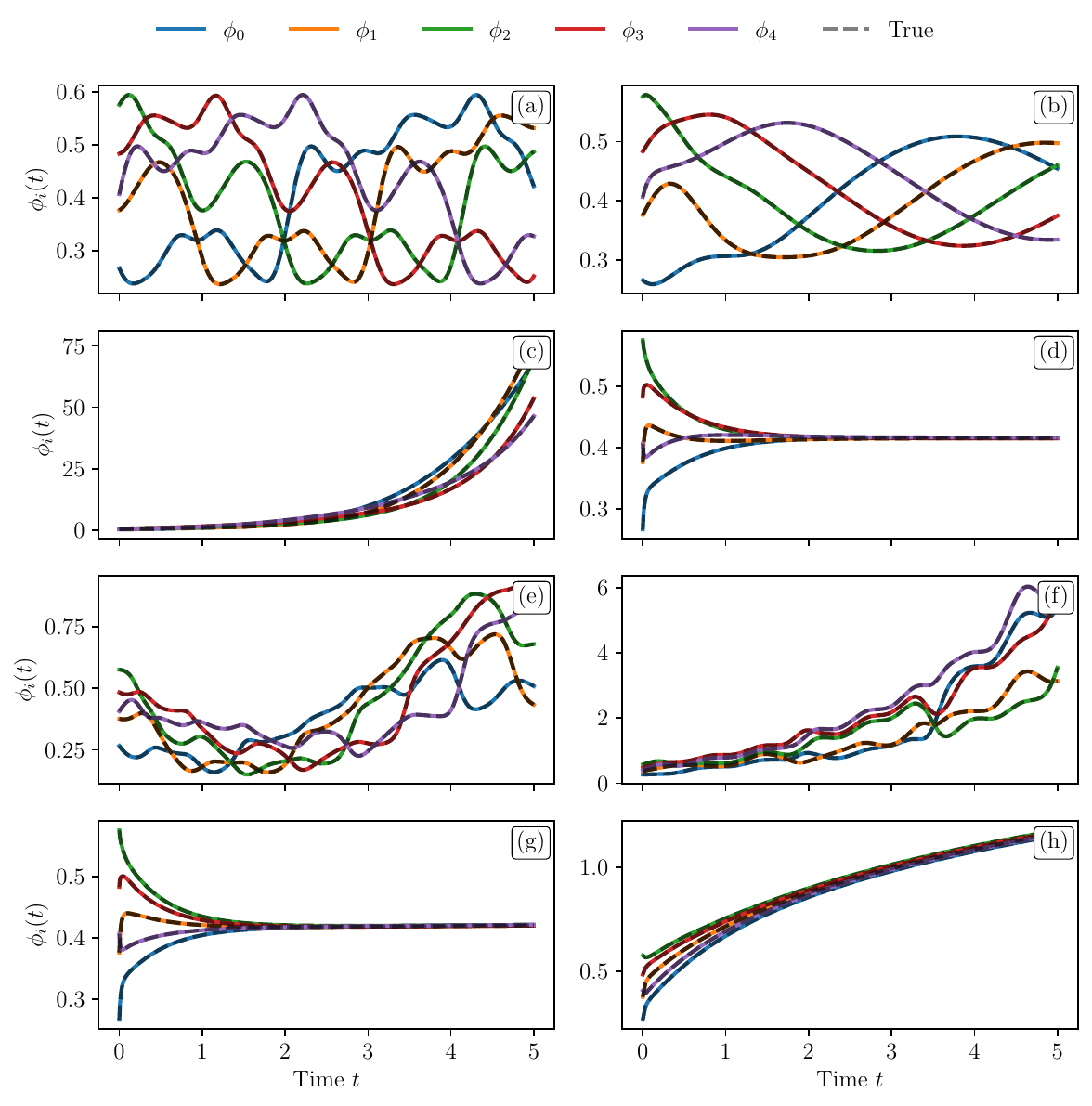}
    \caption{Predicted trajectories of the resolved variables $~\phi$ in all the reaction-diffusion-advection test cases for a single test initial condition. The trajectories are obtained by solving \eqref{eq:vide}, with operators $~R$, $~K$, and $~B$ reconstructed using full observation data.}
    \label{fig:pde1}
\end{figure}

\begin{figure}
    \centering
    \includegraphics[width=0.85\linewidth]{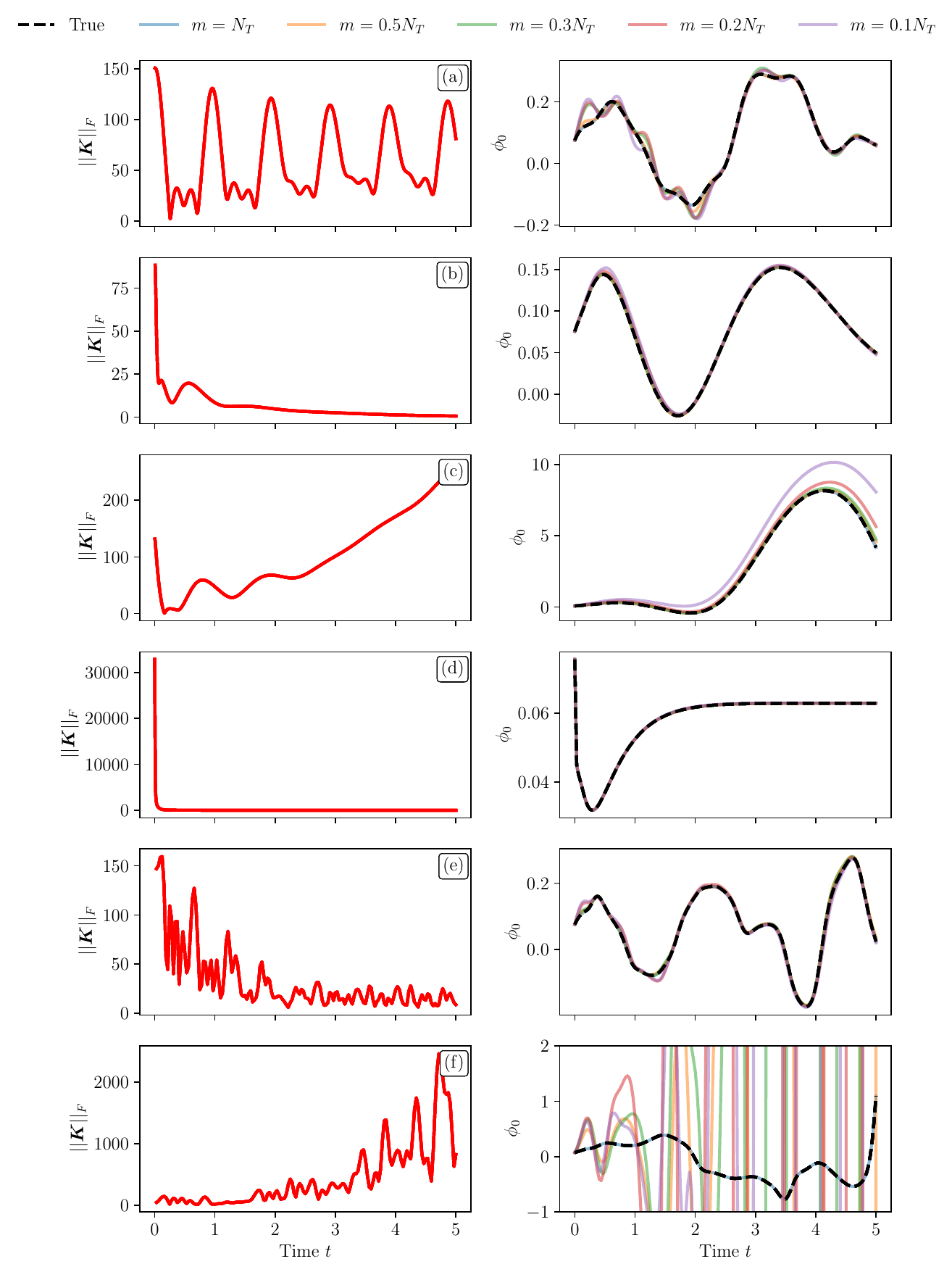}
    \caption{Predictions under a finite-memory approximation. Left: time evolution of the Frobenius norm $||~K||_F$ of the reconstructed full-memory kernel. Right: predicted trajectories of the first component of $\phi$ for a single test initial condition in the first six reaction-diffusion-advection test cases, with $\Delta t = 0.03125$. The operators were reconstructed using full observation data. Finite memory is enforced by truncating the kernel support, setting $K_n=0$ for all lags $n>m$, so that only contributions up to $t_m=m\Delta t$ are retained; $m=N_T$ recovers the full-memory model. In (a) and (c), trajectory accuracy degrades as the truncation becomes more severe (smaller $m$), whereas in (f) the trajectories are not predicted accurately for any truncation level. This behavior is attributable to non-decaying memory kernels, indicating a strong influence of the memory term, particularly in case (f).}
    \label{fig:pde_PH}
\end{figure}

\begin{table}
    \caption{Relative errors in the predicted trajectories $~\Phi^{\text{pred}}$ for the first six reaction-diffusion-advection test cases under a finite-memory approximation of the kernel $~K$, with $~K_n = 0$ for all $n > m$ ($m = N_T$ corresponds to the full-memory case).} \label{tab:eq1_PH}
\centering
\begin{tabular}{c| c c c c c c}
\toprule
 Case & $m=0.1N_T$ & $m=0.2N_T$ & $m=0.3N_T$ & $m=0.5N_T$ & $m=N_T$ \\
\midrule
(a) & $4.64\times10^{-2}$ & $6.13\times10^{-2}$ & $6.60\times10^{-2}$ & $2.01\times10^{-2}$ & $3.86\times10^{-9}$ \\
(b) & $1.17\times10^{-2}$ & $1.09\times10^{-2}$ & $7.05\times10^{-3}$ & $2.44\times10^{-3}$ & $1.70\times10^{-12}$ \\
(c) & $2.42\times10^{-1}$ & $1.60\times10^{-1}$ & $4.77\times10^{-2}$ & $1.62\times10^{-2}$ & $1.86\times10^{-5}$ \\
(d) & $9.77\times10^{-4}$ & $2.76\times10^{-4}$ & $7.96\times10^{-5}$ & $6.04\times10^{-6}$ & $4.64\times10^{-12}$ \\
(e) & $2.27\times10^{-2}$ & $3.39\times10^{-2}$ & $1.68\times10^{-2}$ & $1.78\times10^{-2}$ & $1.22\times10^{-11}$ \\
(f) & $ \ge 1$ & $ \ge 1$ & $ \ge 1$ & $ \ge 1$ & $4.57\times10^{-3}$ \\
\end{tabular}
\end{table}

\begin{table}
\caption{Relative errors in the predicted trajectories $~\Phi^{\text{pred}}$ for the two reaction-diffusion-advection test cases with time-varying $~A(t)$. Operators $~R$, $~K$, $~B$ were reconstructed with partial data by solving the regularized problem \eqref{eq:localpartialreg} for different values of the regularization parameters $\lambda_{~R}$ and $\lambda_{~K}$.} \label{tab:eq1_reg}
\centering
\begingroup
\begin{tabular}{cc*{5}{c}}
\toprule
Case  & {$\lambda_{~R} \backslash \lambda_{~K}$}
   & {$0$} & {$10^{-8}$} & {$10^{-4}$} & {$10^{-2}$} & {$10^{0}$} \\
\midrule
(e) & 0        & $ \ge 1$           & $ \ge 1$           & $ \ge 1$           & $ \ge 1$           & $ \ge 1$ \\
    & {$10^{-8}$} & $5.59\times10^{-7}$ & $ \ge 1$           & $ \ge 1$           & $ \ge 1$           & $ \ge 1$ \\
    & {$10^{-4}$} & $5.28\times10^{-7}$ & $1.15\times10^{-11}$ & $ \ge 1$         & $ \ge 1$           & $ \ge 1$ \\
    & {$10^{-2}$} & $4.61\times10^{-7}$ & $1.15\times10^{-11}$ & $2.35\times10^{-4}$ & $ \ge 1$       & $ \ge 1$ \\
    & {$10^{0}$}  & $4.07\times10^{-7}$ & $1.15\times10^{-11}$ & $2.33\times10^{-4}$ & $2.35\times10^{-2}$ & $3.07\times10^{-2}$ \\
\midrule
(f) & 0        & $ \ge 1$           & $ \ge 1$           & $ \ge 1$           & $ \ge 1$           & $ \ge 1$ \\
    & {$10^{-8}$} & $8.16\times10^{-7}$ & $ \ge 1$           & $ \ge 1$           & $ \ge 1$           & $ \ge 1$ \\
    & {$10^{-4}$} & $1.09\times10^{-6}$ & $1.48\times10^{-11}$ & $ \ge 1$         & $ \ge 1$           & $ \ge 1$ \\
    & {$10^{-2}$} & $7.08\times10^{-7}$ & $1.48\times10^{-11}$ & $ \ge 1$         & $ \ge 1$           & $ \ge 1$ \\
    & {$10^{0}$}  & $1.50\times10^{-6}$ & $1.48\times10^{-11}$ & $6.87\times10^{-4}$ & $3.97\times10^{-2}$ & $1.96\times10^{-1}$ \\
\end{tabular}
\endgroup
\end{table}

\section{Conclusions}\label{s;conclusions}

Using the Mori-Zwanzig formalism, we derived an exact model for the projected reduced dynamics of generic high-dimensional driven linear systems and posed inverse problems to learn the associated model operators from full and partial observation data. By introducing a greedy, sequential-in-time solution strategy, we identified conditions under which these operators are identifiable and the inverse problems are well posed. Numerical experiments on a one-dimensional reaction–diffusion–advection equation and a damped wave equation demonstrate that the resulting data-driven, history-enriched models faithfully reproduce the dynamics of the resolved degrees of freedom.

However, the proposed framework has several limitations. Our analysis is restricted to linear dynamical systems. Extending the formulation and identifiability theory to parametric and nonlinear systems would significantly broaden the range of applications, and we hope the present work will help guide the construction and analysis of inverse problems in those settings. A systematic treatment of noisy and sparse observation data—together with more robust regularization strategies—remains to be developed. These issues lie beyond the scope of this paper but represent natural next steps toward practical, history-enriched reduced models for more complex dynamical systems.

\section*{Acknowledgments}
This work was supported by the National Science Foundation under Cooperative Agreement 2421782 and the Simons Foundation award MPS-AI-00010515(NSF-Simons AI Institute for Cosmic Origins - CosmicAI, \url{https://www.cosmicai.org/}). A. Vijaywargiya also acknowledges support from the Peter O’Donnell Postdoctoral Fellowship awarded by the Oden Institute of Computational Engineering and Sciences.

\appendix 
\begin{appendices}

\section[Proof of Theorem~\ref{thm:type1}]{Proof of \Cref{thm:type1}}\label{app:proof}

Before establishing the well-posedness of the Full Data inverse problem, we need the following lemma.

\begin{lemma}\label{lemma:bound}
    Let $~X \in \mathbb R^{n \times p}$ solve the least-squares problem 
    \begin{align*}
        \underset{~X}{\operatorname{argmin}} \;\|~Z - ~F~X\|_F^2, 
    \end{align*}
    where $~Z \in \mathbb {R}^{m \times p}$ and  $~F \in \mathbb R^{m \times n}$ has full column rank. 
    If $\hat{~X}$ solves the perturbed problem 
    \begin{align*}
        \underset{\hat{~X}}{\operatorname{argmin}} \;\|\hat{~Z} - \hat{~F}\hat{~X}\|_F^2,
    \end{align*}
    where $\hat{~Z} := ~Z + \Delta ~Z \in \mathbb {R}^{m \times p}$ and 
    $\hat{~F} := ~F + \Delta ~F \in \mathbb R^{m \times n}$ with 
    $\|\Delta ~F\|_2 < \sigma_{\min}(~F)$, then 
    \begin{align}\label{eq:rel_err_bound}
        \frac{\|\hat{~X} - ~X\|_F}{\|~X\|_F}
        \leq 
        \frac{\kappa_2(~F)}{1-\kappa_2(~F)\cdot\frac{\|\Delta ~F\|_2}{\|~F\|_2} }
        \left(
            \frac{\|\Delta ~Z\|_F}{\|~F\|_2\|~X\|_F } 
            + \frac{\|\Delta ~F\|_2}{\|~F\|_2} 
            + \frac{\|~O\|_F}{\|~F\|_2 \|~X\|_F}
        \right),
    \end{align}
    where $~O := ~Z - ~F~X$.
\end{lemma}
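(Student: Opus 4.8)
The plan is to treat \Cref{lemma:bound} as a standard perturbation estimate for a full-column-rank overdetermined least-squares problem, extended from the single right-hand-side (vector) case to the matrix case by working columnwise. The first step is to record that both minimization problems have unique solutions expressible through the Moore-Penrose pseudoinverse. Since $~F$ has full column rank, the unique minimizer is $~X = ~F^{\dagger}~Z$ with $~F^{\dagger} = (~F^\top ~F)^{-1}~F^\top$; the Frobenius objective decouples over the columns of $~X$, each column being an ordinary least-squares fit against the same $~F$. To guarantee that the perturbed problem is likewise uniquely solvable, I would invoke Weyl's inequality for singular values: under the hypothesis $\|\Delta ~F\|_2 < \sigma_{\min}(~F)$ one gets $\sigma_{\min}(\hat{~F}) \ge \sigma_{\min}(~F) - \|\Delta ~F\|_2 > 0$, so $\hat{~F}$ also has full column rank and $\hat{~X} = \hat{~F}^{\dagger}\hat{~Z}$ is well defined.

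The core of the argument is an exact algebraic identity for the error. Because $\hat{~F}$ has full column rank, $\hat{~F}^{\dagger}\hat{~F} = ~I$, and therefore
\[
\hat{~X} - ~X = \hat{~F}^{\dagger}\hat{~Z} - \hat{~F}^{\dagger}\hat{~F}~X = \hat{~F}^{\dagger}\big(\hat{~Z} - \hat{~F}~X\big).
\]
I would then expand the right factor using $\hat{~Z} = ~Z + \Delta ~Z$ and $\hat{~F} = ~F + \Delta ~F$, giving $\hat{~Z} - \hat{~F}~X = (~Z - ~F~X) + \Delta ~Z - \Delta ~F~X = ~O + \Delta ~Z - \Delta ~F~X$. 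Taking Frobenius norms and using the mixed submultiplicative inequalities $\|~A~B\|_F \le \|~A\|_2\|~B\|_F$ and $\|\Delta ~F~X\|_F \le \|\Delta ~F\|_2\|~X\|_F$ yields the intermediate estimate $\|\hat{~X} - ~X\|_F \le \|\hat{~F}^{\dagger}\|_2\big(\|~O\|_F + \|\Delta ~Z\|_F + \|\Delta ~F\|_2\|~X\|_F\big)$.

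It remains to bound $\|\hat{~F}^{\dagger}\|_2 = 1/\sigma_{\min}(\hat{~F})$ and to rearrange into the stated form. Using the Weyl bound above together with $\kappa_2(~F) = \|~F\|_2/\sigma_{\min}(~F)$, I would write $1/\sigma_{\min}(\hat{~F}) \le 1/(\sigma_{\min}(~F) - \|\Delta ~F\|_2) = \kappa_2(~F)\big/\big(\|~F\|_2(1 - \kappa_2(~F)\|\Delta ~F\|_2/\|~F\|_2)\big)$, which produces exactly the prefactor and denominator appearing in \eqref{eq:rel_err_bound}. Dividing through by $\|~X\|_F$ and distributing $1/\|~F\|_2$ across the three terms then gives the claimed inequality. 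I expect the only delicate points to be bookkeeping rather than deep obstacles: arranging the $\kappa_2(~F)$ factor so the denominator $1 - \kappa_2(~F)\|\Delta ~F\|_2/\|~F\|_2$ emerges cleanly, and resisting the temptation to exploit the normal-equation orthogonality $~F^\top ~O = ~0$. Using that orthogonality would sharpen the residual term but change its shape (introducing an extra condition-number factor on $\|\Delta ~F\|$); the stated bound, in which $\|~O\|_F$ carries the same prefactor as the other terms, follows precisely from the direct estimate $\hat{~X} - ~X = \hat{~F}^{\dagger}(\hat{~Z} - \hat{~F}~X)$ without that refinement.
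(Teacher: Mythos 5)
Your proposal is correct and follows essentially the same route as the paper's proof: Weyl's singular-value perturbation bound to guarantee $\hat{~F}$ has full column rank, the exact identity $\hat{~X}-~X=\hat{~F}^{\dagger}\bigl(~O+\Delta ~Z-\Delta ~F\,~X\bigr)$, the mixed norm estimate $\|\hat{~X}-~X\|_F\le\|\hat{~F}^{\dagger}\|_2\bigl(\|\Delta ~Z\|_F+\|\Delta ~F\|_2\|~X\|_F+\|~O\|_F\bigr)$, and the rewriting of $1/(\sigma_{\min}(~F)-\|\Delta ~F\|_2)$ via $\|~F\|_2=\kappa_2(~F)\,\sigma_{\min}(~F)$. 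Your observation that one should not invoke the normal-equation orthogonality $~F^\top ~O=~0$ also matches the paper, which likewise keeps $\|~O\|_F$ with the same prefactor as the other perturbation terms.
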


\begin{proof}
By \cite[Corollary 7.3.5]{HornJohnson2013},
\[
    \big|\sigma_{\min} (~F) - \sigma_{\min} (\hat{~F})\big|
    \le \|\Delta ~F\|_2
    \;\implies\;
    \sigma_{\min}(\hat{~F}) \ge \sigma_{\min}(~F) - \|\Delta ~F\|_2 > 0.
\]
Thus, $~F$ and $\hat{~F}$ have full column rank, and the pseudoinverses 
$~F^\dagger := (~F^\top ~F)^{-1} ~F^\top$ and 
$\hat{~F}^\dagger := (\hat{~F}^\top \hat{~F})^{-1} \hat{~F}^\top$ are well defined.
Since $~Z = ~F~X + ~O$ and $\hat{~Z} = ~Z + \Delta ~Z$, we have
\[
    \hat{~X} - ~X
    = \hat{~F}^\dagger(\Delta ~Z - \Delta ~F\,~X + ~O),
\]
and therefore
\[
    \|\hat{~X} - ~X\|_F
    \le \|\hat{~F}^\dagger\|_2
    \bigl(\|\Delta ~Z\|_F + \|\Delta ~F\|_2\|~X\|_F + \|~O\|_F\bigr).
\]
Dividing by $\|~X\|_F$ and using 
$\|\hat{~F}^\dagger\|_2 = 1/\sigma_{\min}(\hat{~F})
 \le 1/(\sigma_{\min}(~F)-\|\Delta ~F\|_2)$
gives
\[
    \frac{\|\hat{~X} - ~X\|_F}{\|~X\|_F}
    \le 
    \frac{\|~F\|_2}{\sigma_{\min}(~F) - \|\Delta ~F\|_2}
    \left(
        \frac{\|\Delta ~Z\|_F}{\|~F\|_2\|~X\|_F}
        + \frac{\|\Delta ~F\|_2}{\|~F\|_2}
        + \frac{\|~O\|_F}{\|~F\|_2\|~X\|_F}
    \right).
\]
Finally, $\|~F\|_2 = \kappa_2(~F)\sigma_{\min}(~F)$ yields \eqref{eq:rel_err_bound}.
\end{proof}

\subsection[Proof of Theorem~\ref{thm:type1}]{Proof of \Cref{thm:type1}}

\begin{proof}
$\\$
    \begin{itemize}
        \item \textbf{Uniqueness}

        Let $~\Psi(t)$ denote the matrix of the flow map associated with \eqref{eq:cont_dyn}, so that
    $~u(t) = ~\Psi(t)\,~u(0)$. Since $[~\phi(t); \tilde{~\phi}(t)]$ is obtained from
    $~u(t)$ by an invertible linear change of coordinates, there exists an invertible matrix
    $\bar{~\Psi}(t)$ in the $[~\phi;\tilde{~\phi}]$-coordinates such that, for each $n$,
    \[
        ~F_{n+1}^R = ~F_0\,\bar{~\Psi}(t_{n+1})^\top.
    \]
    Right multiplication by the invertible matrix $\bar{~\Psi}(t_{n+1})^\top$ preserves column rank, by assumption (i) each $~F_{n+1}^R$ has full column rank. Vertical stacking does not change the column
    space, and therefore $~F^R$ also has full column rank. It is easily verified that under (ii)---(iii) and the above analysis, $~F^{KB}$ has full column rank as well. Therefore, \eqref{eq:Rn}, \eqref{eq:R}, and \eqref{eq:KB} each admit a unique minimizer.

    \item \textbf{Stability}
        
        Under small perturbations, the bound for the relative errors in the solutions to \eqref{eq:Rn}, \eqref{eq:R}, and \eqref{eq:KB} follow directly from \Cref{lemma:bound} with $~O = ~0$ for \eqref{eq:Rn} (for each $n$) and \eqref{eq:R} thanks to model consistency.
    
    \item \textbf{Condition number bound }

    The derivative of $||~x(t)||_2^2$ can be computed as
    \begin{align*}
        \frac{d}{dt}||~x(t)||_2^2 = \frac{d}{dt} ~x(t)^\star ~x(t) = ~x(t)^\star~A(t) ~x(t) + ~x(t)^\star ~A(t)^\star ~x(t) = 2~x(t)^\star ~H(t) ~x(t).
    \end{align*}
    Applying Rayleigh quotient bounds, we obtain
    \begin{align*}
        \lambda_{\min}(~H(t)) ||~x(t)||_2^2 \leq  ~x(t)^\star ~H(t) ~x(t) \leq \lambda_{\max}(~H(t)) ||~x(t)||_2^2.
    \end{align*}
    Hence,
    \begin{align*}
        2\lambda_{\min}(~H(t)) ||~x(t)||_2^2 \leq  \frac{d}{dt} ~x(t)^\star ~x(t) \leq 2\lambda_{\max}(~H(t)) ||~x(t)||_2^2.
    \end{align*}
    Applying Grönwall's inequality, we get, for all $t \ge 0$,
\begin{align}\label{eq:gronwall}
    e^{\int_0^t \lambda_{\min}(~H(s))ds}||~x_0||_2  \leq ||~x(t)||_2 \leq e^{\int_0^t \lambda_{\max}(~H(s))ds}||~x_0||_2 
\end{align}
Substituting this expression in the above inequality and taking the maximum and minimum over all unit $~x(0)$ gives the following bounds for the extremal singular values of $~\Psi(t)$:
\begin{align*}
    \sigma_{\max}(~\Psi(t)) \leq e^{\int_0^t \lambda_{\max}(~H(s))ds}, \qquad \sigma_{\min}(~\Psi(t)) \geq e^{\int_0^t \lambda_{\min}(~H(s))ds}.
\end{align*}
Hence, the 2-norm condition number of $~\Psi(t)$ satisfies the bound
\begin{align*}
    \kappa_2(~\Psi(t)) \leq e^{\int_0^t \lambda_{\max}(~H(s)) -\lambda_{\min}(~H(s))ds}
\end{align*}
Using $~F^R_{n+1} = ~F_0 ~\Psi({t_{n+1}})^\top$, the above bound, and the submultiplicativity of the 2-norm condition number, the bound for $\kappa_2(~F^R_{n+1})$ follows. Now for any unit $~v \in \mathbb R^N$, 
        \begin{align*}
            ||~F^R_{n+1} ~v||_2 &= || ~F_0 ~\Psi(t_{n+1})^\top ~v||_2 
            \geq \sigma_{\min}(~F_0) \sigma_{\min}(~\Psi(t_{n+1})) || ~v||_2 
            \geq \sigma_{\min}(~F_0) e^{\int_0^{t_{n+1}} \lambda_{\min}(~H(s)) ds}
        \end{align*}
        \begin{align*}
            ||~F^R_{n+1} ~v||_2 &= || ~F_0 ~\Psi(t_{n+1})^\top ~v||_2 
            \leq \sigma_{\max}(~F_0) \sigma_{\max}(~\Psi(t_{n+1})) || ~v||_2 
            \leq \sigma_{\max}(~F_0) e^{\int_0^{t_{n+1}} \lambda_{\max}(~H(s))ds}
        \end{align*}
        By the variational characterization of singular values,
        \begin{align*}
            \sigma_{\min}(~F^R)^2 = \min_{||~v|| = 1} \sum_{n=0}^{N_T-1} ||~F_{n+1} ~v||_2^2 \geq \sum_{n=0}^{N_T-1} \sigma_{\min}(~F_0)^2 e^{2\int_0^{t_{n+1}} \lambda_{\min}(~H(s)) ds},
        \end{align*}
        \begin{align*}
            \sigma_{\max}(~F^R)^2 = \max_{||~v|| = 1} \sum_{n=0}^{N_T-1} ||~F_{n+1}~v||_2^2 
            \leq \sum_{n=0}^{N_T-1} \sigma_{\max}(~F_0)^2 e^{2\int_0^{t_{n+1}} \lambda_{\max}(~H(s))ds},
        \end{align*}
        from which the bound for $\kappa_2(~F^R)$ can be obtained. The bound for $\kappa_2(~F^{KB})$ follows using a similar argument.
    \end{itemize}
\end{proof}

\section{Alternative discretizations}\label{app:altdisc}

\subsection{Implicit midpoint rule}\label{sec:impmid}
We provide a second-order accurate discretization of \eqref{eq:vide}, and inverse problems in \cref{type:inv1} and \cref{type:inv2}. Applying the implicit midpoint rule to \eqref{eq:vide}, one obtains
\begin{align}\label{eq:impl}
    \frac{~\phi_{n+1} - ~\phi_{n}}{\Delta t} = ~R_{n+\tfrac12} ~\phi_{n+\tfrac12} + ~B_{n+\tfrac12}\tilde{~\phi}_0 + ~g_{n+\tfrac12} 
    &+ \int_{0}^{t_{n+1/2}} ~K(t_{n+\tfrac12} - s) ~\phi(s) ds \\
    &+ \int_{0}^{t_{n+1/2}} ~B(t_{n+\tfrac12} - s) \tilde{~g}(s) ds
\end{align}
The two integrals are approximated as
\begin{align}
\int_{0}^{t_{n+1/2}} ~K(t_{n+\tfrac12} - s) ~\phi(s) ds 
    &\approx 
    \Delta t \sum_{k=0}^{n-1} ~K(t_{n+\tfrac12} - t_{k+\tfrac12}) ~\phi_{k+\tfrac12} + \frac{\Delta t}{2} ~K(t_{n+\tfrac12} - t_{n+\tfrac12}) ~\phi_{n+\tfrac12}\nonumber \\
    &= \Delta t \sum_{k=0}^{n-1} ~K_{n-k} ~\phi_{k+\tfrac12} + \frac{\Delta t}{2} ~K_0 ~\phi_{n+\tfrac12}, \label{eq:impintK}\\
\int_{0}^{t_{n+1/2}} ~B(t_{n+\tfrac12} - s) \tilde{~g}(s) ds 
&\approx 
\frac{\Delta t}{2} \sum_{k=0}^{n} ~B(t_{n+\tfrac12} - t_k) \tilde{~g}_k + \frac{\Delta t}{2} \sum_{k=1}^{n} ~B(t_{n+\tfrac12} - t_k) \tilde{~g}_k \nonumber \\
&= \frac{\Delta t}{2}~B_{n+\tfrac12} \tilde{~g}_0 + {\Delta t} \sum_{k=1}^{n} ~B_{n-k+\tfrac12} \tilde{~g}_k. \label{eq:impintB}
\end{align}
Expression \eqref{eq:impintK} uses a composite midpoint rule, except in the last (half-) cell, where a left-endpoint evaluation is applied. Expression \eqref{eq:impintB} uses a composite endpoint rule, employing two endpoint evaluations in each cell except the last. Substituting these in \eqref{eq:impl} and rearranging, the update for $~\phi_{n+1}$ is written as follows:

\begin{align}\label{eq:implicit_midpoint}
    ~\phi_{n+1} = \left[~I-\frac{\Delta t}{2}~R_{n+\tfrac12} - \frac{\Delta t^2 }{4} ~K_0 \right]^{-1}
    &\left[
    \left( ~I + \frac{\Delta t}{2} ~R_{n+\tfrac12} +\frac{\Delta t^2}{4} ~K_0 \right) \right. ~\phi_n 
    + \Delta t ~g_{n+\tfrac12}
    + \Delta t ~B_{n+\tfrac12}\tilde{~\phi}_0 \nonumber \\ 
    &\left. \hspace{15mm}
        + \Delta t^2\sum_{k=0}^{n-1}~K_{n-k}\frac{~\phi^k + ~\phi^{k+1}}{2} 
        + \Delta t^2\sum_{k=0}^{n-1}~B_{n-k+\tfrac12}\tilde{~g}_k
    \right],
\end{align}
where $~I$ denotes the identity matrix. The inverse problems that reconstruct the operators $~R, ~K$, and $~B$ based on \eqref{eq:impl} are stated in the two definitions below.

\begin{defn}{(Discrete Full Data inverse problem; implicit midpoint rule)}
Given full trajectory data $\{\big(~\Phi_n, \tilde{~\Phi}_n\big)\}_{n=0}^{N_T}$ and forcing data $\{(~G_n, \tilde{~G_n})\}_{n=0}^{N_T}$, the set of Markovian operators $\{~R_{n+\frac{1}{2}}\}_{n=0}^{N_T-1}$ and $\{\widetilde{~R}_{n+\frac{1}{2}}\}_{n=0}^{N_T-1}$ can be reconstructed by solving
\begin{subequations}
\begin{align}
 \underset{~R_{n+\frac{1}{2}}, \tilde{~R}_{n+\frac{1}{2}}}{\operatorname{argmin}} \; 
 \frac{\Delta t}{2N_s} \sum_{n=0}^{N_T-1} 
\left\| 
    \dot{~\Phi}_{n+\tfrac12} 
    - ~G_{n+\tfrac12}
    - ~\Phi_{n+\frac{1}{2}}~R_{n+\frac{1}{2}}^\top 
    - \tilde{~\Phi}_{n+\frac{1}{2}}\tilde{~R}^\top_{n+\frac{1}{2}} 
\right\|_F^2,
\end{align}    
where $~R^\top$ denotes the transpose of $~R$, $~\Phi_{n+\frac{1}{2}} = \frac{1}{2}(~\Phi_{n}+~\Phi_{n+1})$, and $~G_{n+\frac{1}{2}} = \frac{1}{2}(~G_{n} + ~G_{n+1})$.  
Matrices $\{~K_{n}\}_{n=0}^{N_T-1}$ and $\{~B_{n+\frac{1}{2}}\}_{n=0}^{N_T-1}$ can subsequently be reconstructed by solving 
\begin{align}
     \underset{~K_{n}, ~B_{n+\frac{1}{2}}}{\operatorname{argmin}}\; 
     \frac{\Delta t}{2N_s} \sum_{n=0}^{N_T-1}  
\left\| 
    ~L_{n+\frac{1}{2}}  
    - (\tilde{~\Phi}_{0} + \frac{\Delta t}{2}\tilde{~G}_0 ){~B}^\top_{n+\frac{1}{2}}
    - \sum_{k=0}^{n} \Delta t_{nk} ~\Phi_{k+\tfrac{1}{2}} ~K_{n-k}^\top 
    - \Delta t \sum_{k=1}^{n} \tilde{~G}_{k} ~B_{n-k+\tfrac12}^\top 
\right\|_F^2 \,,
\end{align}
where $~L_{n+\frac{1}{2}} = \dot{~\Phi}_{n+\tfrac12} - ~G_{n+\tfrac12} - ~\Phi_{n+\tfrac{1}{2}}~R_{n+\frac{1}{2}}^\top$, $\Delta t_{nk} = \Delta t$ for $k < n$, and $\Delta t_{nn} = \tfrac{\Delta t}{2}$.
\end{subequations}
\end{defn}

\begin{defn}{(Discrete Partial Data inverse problem; implicit midpoint rule)}
Given partial trajectory data $\{\big(~\Phi_n, \tilde{~\Phi}_0\big)\}_{j=1}^{N_s}$ and forcing data $\{(~G_n, \tilde{~G_n})\}_{n=0}^{N_T}$, matrices $\{~R_{n+\frac{1}{2}}, ~K_{n}, ~B_{n+\frac{1}{2}}\}_{n=0}^{N_T-1}$ can be reconstructed by solving
\begin{align}
    \underset{~R_{n+\frac{1}{2}}, ~K_{n}, ~B_{n+\frac{1}{2}}} {\operatorname{argmin}}\; \frac{\Delta t}{2N_s} \sum_{n=0}^{N_T-1} 
\left\| 
\dot{~\Phi}_{n+\tfrac12} - ~G_{n+\tfrac12} - ~\Phi_{n+\tfrac{1}{2}}~R_{n+\frac{1}{2}}^\top
- (\tilde{~\Phi}_{0} + \frac{\Delta t}{2}\tilde{~G}_0 ){~B}^\top_{n+\frac{1}{2}}
\right. & \nonumber \\ & \hspace{-60mm}
\left. 
- \sum_{k=0}^{n} \Delta t_{nk} ~\Phi_{k+\tfrac{1}{2}} ~K_{n-k}^\top 
- \Delta t \sum_{k=1}^{n} \tilde{~G}_{k} ~B_{n-k+\tfrac12}^\top 
\right\|_F^2,
\end{align}
where $\Delta t_{nk} = \Delta t$ for $k < n$ and $\Delta t_{nn} = \tfrac{\Delta t}{2}$.
\end{defn}

\subsection{Forward Euler}

Due to an implicit discretization, \eqref{eq:time-march} involves a matrix inversion at each time step. An alternative discretization that is also first order accurate but does not require a matrix inversion is provided by applying the forward Euler rule to \eqref{eq:vide} as follows:
\begin{align}\label{eq:backeul}
    \frac{~\phi_{n+1} - ~\phi_{n}}{\Delta t} = ~R_n ~\phi_n + ~B_n\tilde{~\phi}_0 + ~g_{n} 
    + \int_{0}^{t_{n}} ~K(t_{n} - s) ~\phi(s) ds 
    + \int_{0}^{t_{n}} ~B(t_{n} - s) \tilde{~g}(s) ds
\end{align}
The two integrals are approximated as
\begin{align}
    &\int_{0}^{t_{n}} ~K(t_{n} - s) ~\phi(s) ds 
    \approx
    \Delta t \sum_{k=0}^{n-1} ~K(t_n - t_k) ~\phi_k = \Delta t \sum_{k=0}^{n-1} ~K_{n - k} ~\phi_k, \\
    &\int_{0}^{t_{n}} ~B(t_{n} - s) \tilde{~g}(s) ds 
    \approx
    \Delta t \sum_{k=0}^{n-1} ~B(t_n - t_k) \tilde{~g}_k = \Delta t \sum_{k=0}^{n-1} ~B_{n - k} \tilde{~g}_k,
\end{align}
each using a composite left-end point rule. Substituting in \eqref{eq:backeul} and rearranging yields the update
\begin{align}
    ~\phi_{n+1} = ~\phi_n + \Delta t (~R_n ~\phi_n + ~B_n\tilde{~\phi}_0 + ~g_{n}) + \Delta t^2 \sum_{k=0}^{n-1} ~K_{n - k} ~\phi_k + \Delta t^2 \sum_{k=0}^{n-1} ~B_{n - k} \tilde{~g}_k.
\end{align}
The inverse problems for $~R, ~K$, and $~B$ based on \eqref{eq:backeul} are given in the following definitions.

\begin{defn}{(Discrete Full Data inverse problem; forward Euler)}
Given full trajectory data $\{\big(~\Phi_n, \tilde{~\Phi}_n\big)\}_{n=0}^{N_T}$ and forcing data $\{(~G_n, \tilde{~G_n})\}_{n=0}^{N_T}$, the set of Markovian operators $\{~R_{n}\}_{n=0}^{N_T-1}$ can be reconstructed by solving
\begin{subequations}
\begin{align}
 \underset{~R_{n}, \tilde{~R}_{n}}{\operatorname{argmin}} \; 
 \frac{\Delta t}{2N_s} \sum_{n=0}^{N_T-1} 
\left\| 
    \dot{~\Phi}_{n} 
    - ~G_{n}
    - ~\Phi_{n}~R_{n}^\top 
    - \tilde{~\Phi}_{n}\tilde{~R}^\top_{n} 
\right\|_F^2,
\end{align}    
where $~R^\top$ denotes the transpose of $~R$. Matrices $\{~K_{n}\}_{n=0}^{N_T-1}$ and $\{~B_{n}\}_{n=0}^{N_T-1}$ are subsequently reconstructed by solving 
\begin{align}
     \underset{~K_{n}, ~B_{n}}{\operatorname{argmin}}\; 
     \frac{\Delta t}{2N_s} \sum_{n=0}^{N_T-1}  
\left\| 
    ~L_{n}  
    - \tilde{~\Phi}_{0} {~B}^\top_{n}
    - \Delta t \sum_{k=0}^{n-1} ~\Phi_{k} ~K_{n-k}^\top 
    - \Delta t \sum_{k=0}^{n-1} \tilde{~G}_{k} ~B_{n-k}^\top 
\right\|_F^2 \,,
\end{align}
where $~L_{n} = \dot{~\Phi}_{n} - ~G_{n} - ~\Phi_{n}~R_{n}^\top$.
\end{subequations}
\end{defn}

\begin{defn}{(Discrete Partial Data inverse problem; forward Euler)}
Given partial trajectory data $\{\big(~\Phi_n, \tilde{~\Phi}_0\big)\}_{j=1}^{N_s}$ and forcing data $\{(~G_n, \tilde{~G_n})\}_{n=0}^{N_T}$, matrices $\{~R_{n}, ~K_{n}, ~B_{n}\}_{n=0}^{N_T-1}$ can be reconstructed by solving
\begin{align}
    \underset{~R_{n}, ~K_{n}, ~B_{n}} {\operatorname{argmin}}\; \frac{\Delta t}{2N_s} \sum_{n=0}^{N_T-1} 
\left\| 
\dot{~\Phi}_{n} - ~G_{n} - ~\Phi_{n}~R_{n}^\top
- \tilde{~\Phi}_{0} {~B}^\top_{n}
-  \Delta t\sum_{k=0}^{n-1} ~\Phi_{k} ~K_{n-k}^\top 
- \Delta t\sum_{k=0}^{n-1}  \tilde{~G}_{k} ~B_{n-k}^\top 
\right\|_F^2.
\end{align}
\end{defn}

\section{Pseudospectral discretization}\label{app:pseudo}

We consider spatially $2\pi$-periodic problems and discretize the spatial variable by a Fourier pseudospectral method. Let $N$ be an even integer and introduce the equispaced grid points
$$
x_j = \frac{2\pi j}{N}, \qquad j = 0,\dots,N-1.
$$
Given a periodic function $u(t,x)$ we denote its nodal values by
$$
u_j(t) = u(t,x_j), \qquad j=0,\dots,N-1,
$$
and collect them in the vector
$$
~{u}(t) = (u_0(t),\dots,u_{N-1}(t))^{\top} \in \mathbb{C}^N.
$$
The spatial discretization is formulated in terms of the unitary discrete Fourier transform. Let
$$
K_N = \Bigl\{-\tfrac{N}{2}+1,\dots,-1,0,1,\dots,\tfrac{N}{2}\Bigr\}
$$
be a set of $N$ distinct integer wavenumbers. For a given vector $\mathbf{u}(t)$ we define its discrete Fourier coefficients by
$$
\widehat{u}_k(t) = \frac{1}{\sqrt{N}}\sum_{j=0}^{N-1} u_j(t)\,e^{-ikx_j},
\qquad k\in K_N,
$$
with the inverse relation
$$
u_j(t) = \frac{1}{\sqrt{N}}\sum_{k\in K_N} \widehat{u}_k(t)\,e^{ikx_j},
\qquad j=0,\dots,N-1.
$$
Let $~F\in\mathbb{C}^{N\times N}$ denote the unitary discrete Fourier matrix with entries
$$
F_{k,j} = \frac{1}{\sqrt{N}} e^{-ikx_j},
\qquad k\in K_N,\; j=0,\dots,N-1.
$$
Then
$$
\widehat{~{u}}(t) = ~F ~{u}(t),
\qquad
~{u}(t) = ~F^{*} \widehat{~{u}}(t),
$$
where $~F^{*}$ denotes the conjugate transpose of $~F$. Differentiation in Fourier space is diagonal. We introduce the diagonal matrices
$$
~\Lambda = \mathrm{diag}(ik)_{k\in K_N}, 
\qquad
~\Lambda^{(2)} = \mathrm{diag}(-k^2)_{k\in K_N},
$$
so that
$$
\widehat{\partial_x u}(t) = \Lambda\,\widehat{~{w}}(t),
\qquad
\widehat{\partial_{xx} u}(t) = \Lambda^{(2)}\,\widehat{~{w}}(t).
$$
Transforming back to physical space yields
$$
(\partial_x u)(t) = ~F^{*} ~\Lambda ~F\,~{u}(t),
\qquad
(\partial_{xx} u)(t) = ~F^{*} ~\Lambda^{(2)} ~F\,~{u}(t).
$$
This defines the spectral differentiation matrices
\begin{align}\label{eq:diffmatrices}
 ~{D} := ~F^{*}~\Lambda ~F,
\qquad
~{D}^{(2)} := ~F^{*}~\Lambda^{(2)} ~F,
\end{align}
which represent the pseudospectral approximations of the first and second spatial derivatives at the collocation points.

The discretizations \eqref{eq:ode1} and \eqref{eq:ode2} follow by enforcing the PDEs \eqref{eq:pde1} and \eqref{eq:damped_wave} at the collocation points $x_j$, with each spatial derivative replaced by its pseudospectral approximation via the differentiation matrices defined in \eqref{eq:diffmatrices}.

\section{Numerical example: Damped wave equation}\label{app:wave}

Consider the initial-boundary value problem
\begin{equation}
\label{eq:damped_wave}
\left\{
\begin{aligned}
\partial_{tt} u(t,x) 
&\;+\; \big(\gamma_1(t)+\gamma_2(t)\big)\,\partial_t u(t,x)
\\
&\;+\; \big(\dot\gamma_1(t)+\gamma_1(t)\gamma_2(t)\big)\, u(t,x)
   \;=\; c^2\,\partial_{xx} u(t,x),
   && (t,x)\in[0,T]\times[0,2\pi],\\[0.3em]
u(0,x) &= u_0(x), \quad \partial_t u(0,x)=\dot u_0(x),
   && x\in[0,2\pi],\\[0.3em]
u(t,0) &= u(t,2\pi), && t \in [0,T].
\end{aligned}
\right.
\end{equation}
Equation~\eqref{eq:damped_wave} represents a damped wave equation with time-dependent damping coefficients $\gamma_1(t)$ and $\gamma_2(t) > 0$ and constant wave speed $c$. No external forcing is present in this system. 
A Fourier pseudospectral discretization on the same periodic grid used previously yields the semi-discrete system
\begin{align}\label{eq:ode2}
  \ddot{~u}(t)
  + \big(\gamma_1(t) + \gamma_2(t)\big)\,\dot{~u}(t)
  + \big(\dot{\gamma}_1(t) + \gamma_1(t)\gamma_2(t)\big)\,~u(t)
  = c^2 ~{D}^2 ~u(t),
\end{align}
where $~u(t)\in\mathbb R^N$ is the state vector and $~D$ is the spectral differentiation matrix. Introducing the auxiliary variables $~\phi(t), \tilde{~\phi}(t) \in \mathbb R^N$
\begin{align*}
  ~\phi(t) := ~u(t), \qquad
  \tilde{~\phi}(t) := \dot{~\phi}(t) + \gamma_1(t)\,~\phi(t),
\end{align*}
the semi-discrete system \eqref{eq:ode2} can be written in first-order form as
\begin{align}\label{eq:damped_wave_first_order}
  \begin{bmatrix} \dot{~\phi} \\ \dot {\tilde{~\phi}} \end{bmatrix}
  =
  \begin{bmatrix} -\gamma_1(t) ~I & ~I \\ c^2~D^2  & -\gamma_2(t) ~I \end{bmatrix}
  \begin{bmatrix} ~\phi \\ \tilde{~\phi} \end{bmatrix},
\end{align}
where $~I \in \mathbb{R}^{N \times N}$ is the identity matrix. Applying the MZ formalism and eliminating $\tilde{~\phi} $ gives the following VIDE for $~\phi$:
\begin{align}\label{eq:waveMZ}
  \dot{~\phi} (t) = \underbrace{-\gamma_1(t) ~I}_{~R(t)} ~\phi(t) 
  + \underbrace{e^{-\Gamma(t)}~I}_{~B(t)}\tilde{~\phi}_0 
  + \int_0^t \underbrace{e^{-(\Gamma (t) - \Gamma(s))}c^2 ~D^2}_{~K(t,s)} ~\phi(s) ds
\end{align}
where $\Gamma(t) := \int_0^t \gamma_2(s)\,ds$. The memory kernel $~K(t,s)$ is
stationary, i.e., depends only on the time lag $t-s$, if and only if
$\gamma_2(t)$ is constant in time. The goal of this experiment is to obtain
accurate trajectories of $~\phi$ using the MZ model \eqref{eq:waveMZ}, with
operators reconstructed for different choices of the damping coefficients. We
study three cases:
\begin{enumerate}[label=(\alph*)]
    \item $\gamma_1 = 0.25$ and $\gamma_2 = 0.25$;
    \item $\gamma_1 = 0.5\sin^2(t)$ and $\gamma_2 = 1$;
    \item $\gamma_1 = 0.5\cos(t)$ and $\gamma_2 = 1 + \sin(t)$.
\end{enumerate}
Cases (b) and (c) have time-varying parameters. In case (b), $\gamma_2$ is
constant so the memory kernel remains stationary, whereas in case (c) both
$\gamma_1$ and $\gamma_2$ vary in time and $~K(t,s)$ is non-stationary. We set $T = 2\pi$ and $c = 1$. Choosing $N = 60$ produces a first-order system
\eqref{eq:damped_wave_first_order} with $2N = 120$ degrees of freedom. A total
of $N_s = 120$ independent initial conditions are generated for training and
30 for testing, and the data matrices $~\Phi_n$ and $\tilde{~\Phi}_n$ are
constructed exactly as in the previous experiment.

As before, we first reconstruct the operators using full observation data,
compute trajectories of the resolved variable $~\phi$ on a hierarchy of time
grids, and evaluate the associated relative errors. The results, summarized in
\Cref{tab:eq_2_conv}, show second-order convergence of $~R$ and $~K$. The
prediction error in $~\Phi^\text{pred}$ remains consistent with the solver
tolerance $\tau_{\text{sol}}$ even for case (c), which has a non-stationary
memory kernel.


Next, we assess prediction under a finite-memory approximation of the memory kernel. The time step is fixed at $\Delta t = 0.0625$, and a total of 2000 \texttt{lsqr} iterations are performed to reconstruct $~K$ through \eqref{eq:KB_PH}. The finite-memory parameter $m$ is varied as in \Cref{sec:pde1}, and the resulting relative errors in $~\Phi^\text{pred}$ are reported in \Cref{tab:eq2_PH}. Because the decay of the memory kernel is governed solely by $\gamma_2$, the damping is strong enough in all cases to keep the error below $5\%$ when $m = 0.5N_T$. For case (c), the error remains below $5\%$ even for $m = 0.1N_T$, whereas for cases (a) and (b) the errors rise to approximately $15\%$ and $9\%$, respectively.


Finally, we examine reconstruction with partial data for cases (b) and (c), which have time-dependent PDE parameters. The setup mirrors \Cref{sec:pde1}: the regularization parameters $(\lambda_{~R},\lambda_{~K})$ are varied over the same set as before, with $\Delta t = 0.015625$ in this experiment. The relative errors in $~\Phi^\text{pred}$ for each pair $(\lambda_{~R},\lambda_{~K})$ are reported in \Cref{tab:eq2_reg}. In contrast to the reaction-diffusion-advection case, the errors do not blow up in the absence of regularization, although it remains considerably larger compared to $\tau_\text{sol}$: $\sim 10^{-2}$ in case (b) and $\sim 10^{-5}$ in (c). This happens due to strongly decaying memory kernels for the two cases. Setting one of the regularization parameters to zero and varying the other yields errors comparable to $\tau_\text{sol}$, whereas increasing both parameters simultaneously leads to substantially larger errors, exceeding 100\% in many instances.

\begin{table}
  \caption{Relative errors in the predicted trajectories $~\Phi^\text{pred}$ and operators $~R$ and $~K$ for various $\Delta t$ across all damped wave equation test cases. Both $~K$ and $~R$ exhibit second-order convergence and $~\Phi^\text{pred} \sim \tau_{sol}$. A zero error for $~K$ indicates it was used in \eqref{eq:errK} because $~K^\text{true}$ was unavailable. Operators were reconstructed with full data.}\label{tab:eq_2_conv}
  \centering
  \begin{tabular}{
    c
    S[table-format=1.4e-2, table-sign-exponent]
    S[table-format=1.2e-03, table-sign-exponent]
    S[table-format=1.2e-03, table-sign-exponent]
    S[table-format=1.2]
    S[table-format=1.2e-03, table-sign-exponent]
    S[table-format=1.2]
  }
    \toprule
    Case & {$\Delta t$} & {Error in $~\Phi^\text{pred}$} & {Error in $~K$} & {Rate} & {Error in $~R$} & {Rate}\\
    \midrule
    (a) & 6.2500e-02  & 2.30e-11 & 3.94e-01 & {---}  & 6.43e-01 & {---} \\
        & 3.1250e-02  & 2.29e-11 & 7.47e-02 & 2.40  & 1.05e-01 & 2.61 \\
        & 1.5625e-02  & 2.28e-11 & 1.79e-02 & 2.06  & 2.44e-02 & 2.11 \\
    \bottomrule
    (b) & 6.2500e-02  & 2.04e-12 & 4.04e-01 & {---}  & 1.12e+00 & {---} \\
        & 3.1250e-02  & 2.04e-12 & 7.92e-02 & 2.35   & 1.78e-01 & 2.66 \\
        & 1.5625e-02  & 2.04e-12 & 1.93e-02 & 2.04   & 4.09e-02 & 2.12 \\
        & 7.8125e-03  & 2.04e-12 & 0.00     & {---}  & 1.02e-02 & 2.00 \\
    \bottomrule
    (c) & 6.2500e-02  & 1.59e-12  & 3.87e-01  & {---}  & 9.07e-01 & {---} \\
        & 3.1250e-02   & 1.59e-12  & 7.55e-02  & 2.35   & 1.42e-01 & 2.68 \\
        & 1.5625e-02   & 1.60e-11  & 1.45e-02  & 2.38   & 3.24e-02 & 2.13 \\
        & 7.8125e-03  & 1.60e-11  & 0.00      & {---}  & 8.03e-03 & 2.01
  \end{tabular}
\end{table}

\begin{table}
\caption{Relative errors in the predicted trajectories $~\Phi^{\text{pred}}$ for damped wave equation test cases under a finite-memory approximation of the kernel $~K$, with $~K_n = 0$ for all $n > m$ ($m = N_T$ corresponds to the full-memory case).}\label{tab:eq2_PH}
\centering
\begin{tabular}{c| c c c c c}
\toprule
 Case & $m=0.1N_T$ & $m=0.2N_T$ & $m=0.3N_T$ & $m=0.5N_T$ & $m=N_T$ \\
\midrule
(a) & $1.49\times10^{-1}$ & $9.32\times10^{-2}$ & $6.07\times10^{-2}$ & $3.25\times10^{-2}$ & $4.67\times10^{-3}$ \\
(b) & $9.06\times10^{-2}$ & $4.57\times10^{-2}$ & $2.20\times10^{-2}$ & $7.55\times10^{-3}$ & $6.17\times10^{-3}$ \\
(c) & $4.59\times10^{-2}$ & $3.29\times10^{-2}$ & $2.70\times10^{-2}$ & $1.60\times10^{-2}$ & $2.58\times10^{-3}$ \\
\end{tabular}
\end{table}

\begin{table}
\caption{Relative errors in the predicted trajectories $~\Phi^{\text{pred}}$ for the two damped wave equation test cases with time-varying $~A(t)$. Operators $~R$, $~K$, $~B$ were reconstructed with partial data by solving the regularized problem \eqref{eq:localpartialreg} for different values of the regularization parameters $\lambda_{~R}$ and $\lambda_{~K}$.}\label{tab:eq2_reg}
\centering
\begingroup
\begin{tabular}{cc*{5}{c}}
\toprule
Case  & $\lambda_{~R}\backslash\lambda_{~K}$
   & {$0$} & {$10^{-8}$} & {$10^{-4}$} & {$10^{-2}$} & {$10^{0}$} \\
\midrule
(b) & 0        & $2.39\times10^{-2}$ & $1.34\times10^{-11}$ & $2.15\times10^{-11}$ & $1.32\times10^{-11}$ & $9.91\times10^{-12}$ \\
    & {$10^{-8}$} & $2.04\times10^{-12}$ & $ \ge 1$ & $ \ge 1$ & $ \ge 1$ & $ \ge 1$ \\
    & {$10^{-4}$} & $2.04\times10^{-12}$ & $2.72\times10^{-12}$ & $ \ge 1$ & $ \ge 1$ & $ \ge 1$ \\
    & {$10^{-2}$} & $2.04\times10^{-12}$ & $2.72\times10^{-12}$ & $1.09\times10^{-4}$ & $ \ge 1$ & $ \ge 1$ \\
    & {$10^{0}$}  & $2.04\times10^{-12}$ & $2.72\times10^{-12}$ & $1.22\times10^{-4}$ & $6.76\times10^{-3}$ & $ \ge 1$ \\
\midrule
(c) & 0        & $1.26\times10^{-5}$ & $7.92\times10^{-10}$ & $4.08\times10^{-10}$ & $2.60\times10^{-10}$ & $9.04\times10^{-10}$ \\
    & {$10^{-8}$} & $1.60\times10^{-12}$ & $ \ge 1$ & $ \ge 1$ & $ \ge 1$ & $ \ge 1$ \\
    & {$10^{-4}$} & $1.59\times10^{-12}$ & $1.80\times10^{-12}$ & $ \ge 1$ & $ \ge 1$ & $ \ge 1$ \\
    & {$10^{-2}$} & $1.59\times10^{-12}$ & $1.79\times10^{-12}$ & $5.26\times10^{-5}$ & $ \ge 1$ & $ \ge 1$ \\
    & {$10^{0}$}  & $1.59\times10^{-12}$ & $1.79\times10^{-12}$ & $5.83\times10^{-5}$ & $1.99\times10^{-3}$ & $ \ge 1$ \\
\end{tabular}
\endgroup
\end{table}

\end{appendices}

\pagebreak

\bibliographystyle{siamplain} \bibliography{references}

\end{document}